\documentclass{article}
\pdfoutput=1
\usepackage[utf8]{inputenc}
\usepackage{enumerate}
\usepackage{amsfonts,amsmath,amssymb,amsthm,bbm,bm}
\usepackage{mathrsfs}
\usepackage{mathtools}
\usepackage{xcolor}
\usepackage{soul}
\setstcolor{red}

\usepackage{graphicx}
\usepackage{caption}
\usepackage{subcaption}
\usepackage{float}
\usepackage{enumitem}

\usepackage[style=alphabetic,backend=biber, sorting = nyt, maxbibnames=99]{biblatex}
\addbibresource{ref.bib}

\usepackage{sepfootnotes}
\usepackage[hyperfootnotes=false]{hyperref}
\hypersetup{colorlinks}
\usepackage[colorinlistoftodos]{todonotes}

\usepackage{thmtools}
\usepackage{hyperref}
\usepackage[capitalise]{cleveref}
\hypersetup{
  colorlinks=false
}

\newtheorem{theorem}{Theorem}[section]

\newtheorem{lemma}[theorem]{Lemma}
\newtheorem{proposition}[theorem]{Proposition}

\theoremstyle{remark}
\newtheorem{remark}[theorem]{Remark}

\theoremstyle{definition}
\newtheorem{definition}[theorem]{Definition}
\newtheorem{example}[theorem]{Example}

\numberwithin{equation}{section}

\newcommand{\E}{\mathbb{E}}
\newcommand{\R}{\mathbb{R}}
\newcommand{\N}{\mathbb{N}}

\newcommand{\sP}{\mathbb{P}}
\newcommand{\calP}{\mathcal{P}}

\newcommand{\calS}{\mathcal{S}}


\newcommand{\bI}{\mathbf{I}}

\newcommand{\bt}[1]{\mathbf{#1}}
\newcommand{\btp}[1]{\mathbf{#1'}}

\def\calN{{\mathcal{N}}}
\def\calD{{\mathcal{D}}}

\newcommand{\calJ}{\mathcal{J}}

\newcommand{\W}{\mathcal{W}}
\newcommand{\AW}{\mathcal{AW}}

\newcommand{\DKL}{\calD_{\mathrm{KL}}}

\newcommand{\cpl}{\mathrm{Cpl}}
\newcommand{\bccpl}{\mathrm{Cpl}_{\mathrm{bc}}}


\newcommand{\tr}{\mathrm{tr}}
\newcommand{\diag}{\mathrm{diag}}

\newcommand{\sign}{\mathrm{sign}}

\oddsidemargin 0cm \evensidemargin -0.7cm
\textwidth 16.5cm

\title{Entropic adapted Wasserstein distance on Gaussians}
\author{
Beatrice Acciaio\thanks{ETH Z\"{u}rich, Department of Mathematics, Switzerland.
\emph{beatrice.acciaio@math.ethz.ch}
}~~~
Songyan Hou\thanks{ETH Z\"{u}rich, Department of Mathematics, Switzerland.
\emph{songyan.hou@math.ethz.ch}
}~~~
Gudmund Pammer\thanks{TU Graz, Institute of Statistics, Austria. \emph{gudmund.pammer@tugraz.at}
}
}

\date{\today}
\begin{document}
\maketitle

\begin{abstract}
    The adapted Wasserstein ($\AW$) distance is a metric for quantifying distributional uncertainty and assessing the sensitivity of stochastic optimization problems on time series data. 
    A computationally efficient alternative to it, is provided by the entropically regularized $\AW$-distance.
    Suffering from similar shortcomings as classical optimal transport, there are only few explicitly known solutions to those distances.
    Recently, Gunasingam--Wong \cite{gunasingam2024adapted} provided a closed-form representation of the $\AW$-distance between real-valued stochastic processes with Gaussian laws. In this paper, we extend their work in two directions, by considering multidimensional ($\R^d$-valued) stochastic processes with Gaussian laws and including the entropic regularization.
    In both settings, we provide closed-form solutions.
    \\

    \noindent\emph{Keywords:} adapted Wasserstein distance, entropic regularization, Gaussian distributions\\
    MSC (2020): 60G15, 49Q22, 94A17
\end{abstract}

\section{Introduction} \label{sec:intro}
The Wasserstein distance is a ubiquitous concept with applications across numerous fields, such as statistics, economics, biology, and machine learning. Its rise was crucially amplified by the introduction of entropic regularization, which traces back to Schr\"{o}dinger's work in 1931 \cite{ISch1931}; see also \cite{leonard2013survey} for a survey on the connection with optimal transport. In the current paper, extending the work of Gunasingam--Wong \cite{gunasingam2024adapted}, we provide a closed-form solution of the entropic adapted Wasserstein distance between stochastic processes with Gaussian laws. 

For probabilities $\mu,\nu$ on $\R^d$, the entropic 2-Wasserstein distance $\W_{2,\lambda}$ is defined by
\begin{equation}
    \label{eq:W.reg}
    \W_{2,\lambda}^2(\mu,\nu) 
    \coloneqq \inf_{\pi \in \cpl(\mu,\nu)} \int |x-y|^2 \, d\pi + \lambda \mathcal D_{\rm KL}(\pi | \mu \otimes \nu),
\end{equation}
where $\lambda \ge 0$ is the regularization parameter, $\mathcal D_{\rm KL}$ is the Kullback-Leibler divergence (or relative entropy), and $\cpl(\mu,\nu)$ denotes the set of all probabilities on $\R^d \times \R^d$ with first marginal $\mu$ and second marginal $\nu$. Clearly, for $\lambda=0$ we recover the classical 2-Wasserstein distance  $ \W_{2}(\mu,\nu)$.
Building on transport theory, adapted optimal transport has been introduced to deal with distributions in dynamic settings, where the time component and the flow of information play crucial roles.
Remarkably, this
provides a robust framework for distributional uncertainty and sensitivity of stochastic optimization problems of time series data; see e.g.\ \cite{backhoff2020adapted, pflug2014multistage, backhoff2017causal, acciaio2022quantitative, Bartl2023SensitivityOM}.

For two laws of discrete-time stochastic processes $\mu,\nu$ on $\R^{dT}$, where $T$ is the number of time steps, the entropic adapted 2-Wasserstein distance $\AW_{2,\lambda}$ is defined by
\begin{equation}
    \label{eq:AW.reg}
    \AW_{2,\lambda}^2(\mu,\nu) \coloneqq
    \inf_{\pi \in \bccpl(\mu,\nu)} \int |x-y|^2 \, d\pi + \lambda \mathcal D_{\rm KL}(\pi | \mu \otimes \nu),
\end{equation}
where $\bccpl(\mu,\nu)$ denotes the set of bi-causal couplings between $\mu$ and $\nu$; see Definition~\ref{def:biccpl} below. 
For $\lambda=0$ this boils down to the adapted 2-Wasserstein distance, denoted by $\AW_{2}(\mu,\nu)$.

As for the classical 
Wasserstein distance, in the adapted setting it is notoriously difficult to provide closed form solutions of the distance and the optimal couplings.
Our main results are Theorems~\ref{thm.EAW_dT} and \ref{thm.EAW_dT_sol} in \Cref{sec.eawd} below, which extend \cite{gunasingam2024adapted} to the multi-dimensional entropically regularized setting.
For comparison and simplicity of exposition, we present them here for the case $d=1$ (real-valued processes).
For $\lambda \ge 0$, we define the function $f_\lambda(x)$ as $\sign(x)\Big(\sqrt{(\frac{\lambda}{4|x|})^2 + 1} - \frac{\lambda}{4|x|}\Big)$ if $x\neq 0$ and $0$ otherwise. {\color{black} Note that $f_\lambda(s)$ is the unique optimizer of $\max_{x\in (-1,1)}2sx + \frac{\lambda}{2}\log(1-x^2)$ when $\lambda > 0$, and otherwise the sign function.}

\begin{theorem} \label{thm:AW.reg}
    Let $\mu = \mathcal N(a,A)$ and $\nu = \mathcal N(b,B)$ be non-degenerate Gaussians on $\R^T$, whose covariance matrices have Cholesky decompositions $A = LL^\top$ and $B = MM^\top$. Then
    \begin{equation}
        \label{eq:thm.AW.reg}
        \AW^2_{2,\lambda}(\mu,\nu) = |a - b|^2 +
        \tr\big(A + B\big) - 2\tr\big(P_\lambda N \big) - \frac{\lambda}{2} \log \det(I - P_\lambda^2) ,
    \end{equation}
    where $P_\lambda = f_\lambda(N)$\footnote{\color{black} For $f\colon \R\to \R$ and $M\in\R^{d \times d}$, we denote by $f(M)$ the element-wise function application of $f$ on $M$.} and $N = \diag([(M^\top L)_{tt}]_{t=1}^{T})$.
\end{theorem}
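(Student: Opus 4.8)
The plan is to pass to \emph{innovation coordinates}, where the adapted structure of both marginals trivializes, reduce the bicausal constraint to a statement about a single cross-covariance matrix, and then solve a finite-dimensional separable optimization. Since $\mu,\nu$ are non-degenerate, the Cholesky factors $L,M$ are invertible and lower triangular, so the maps $\xi\mapsto a+L\xi$ and $\eta\mapsto b+M\eta$ are bijections pushing $\calN(0,I)$ forward to $\mu$ and $\nu$; because $L,M$ are lower triangular and invertible they preserve the canonical filtration at every time $t$, so a coupling of $(X,Y)$ is bicausal iff the induced coupling of the innovations $(\xi,\eta)$ is bicausal, which I would record as a preliminary lemma. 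The key structural step is then that, for standard Gaussian innovations, bicausality forces the cross-covariance $R\defeq\E[\xi\eta^\top]$ to be diagonal, $R=\diag(\rho_1,\dots,\rho_T)$ with $|\rho_t|\le 1$. I would prove this directly from the one-step kernel factorization of a bicausal coupling: for $s<t$, conditioning on the time-$(t-1)$ past gives $\E[\xi_t\eta_s]=\E[\eta_s\,\E[\xi_t\mid\text{past}]]=0$, since the $\xi_t$-marginal of the one-step kernel is the past-independent law $\calN(0,1)$, and the symmetric argument disposes of $s>t$. Crucially this uses only the kernel structure, not Gaussianity of the coupling.

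With $R$ diagonal I would compute the objective explicitly. The quadratic cost expands as $\E|X-Y|^2=|a-b|^2+\tr(A)+\tr(B)-2\sum_t (M^\top L)_{tt}\,\rho_t$, where only the diagonal of $M^\top L$ survives precisely because $R$ is diagonal; with $N=\diag([(M^\top L)_{tt}])$ this is $|a-b|^2+\tr(A+B)-2\tr(N\diag(\rho))$. For the entropic term I would use that $\DKL$ is invariant under the joint linear change of variables, so $\DKL(\pi\mid\mu\otimes\nu)$ equals the divergence of the innovation coupling against $\calN(0,I_{2T})$; since the log-density ratio of two Gaussians is quadratic, the max-entropy/Pythagorean identity $\DKL(\pi\mid\mathbb{G})=\DKL(\pi\mid\pi_G)+\DKL(\pi_G\mid\mathbb{G})$, with $\pi_G$ the Gaussian sharing $\pi$'s first two moments and $\mathbb{G}=\mu\otimes\nu$, yields the lower bound $\DKL(\pi\mid\mu\otimes\nu)\ge -\tfrac12\sum_t\log(1-\rho_t^2)$. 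Combining, every bicausal $\pi$ with innovation correlations $\rho$ satisfies $\text{objective}(\pi)\ge |a-b|^2+\tr(A+B)-2\sum_t (M^\top L)_{tt}\rho_t-\tfrac\lambda2\sum_t\log(1-\rho_t^2)$, a sum of independent scalar problems $\max_{\rho_t\in(-1,1)} 2(M^\top L)_{tt}\rho_t+\tfrac\lambda2\log(1-\rho_t^2)$, whose unique maximizer is $\rho_t^\star=f_\lambda((M^\top L)_{tt})$ by the fact recalled before the theorem. Assembling these into $P_\lambda=f_\lambda(N)=\diag(\rho^\star)$ and using $\sum_t (M^\top L)_{tt}\rho_t^\star=\tr(P_\lambda N)$ and $\sum_t\log(1-(\rho_t^\star)^2)=\log\det(I-P_\lambda^2)$ gives the claimed right-hand side as a lower bound.

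To finish I would exhibit a matching bicausal coupling: couple each pair $(\xi_t,\eta_t)$ as a centered bivariate Gaussian with correlation $\rho_t^\star$, independently across $t$, and push forward through the innovation maps. This coupling factorizes into one-step couplings of the conditional laws, hence is bicausal; its innovation cross-covariance is exactly $\diag(\rho^\star)$; and all inequalities above hold with equality, so its objective value meets the lower bound. The $\lambda=0$ case recovers the Gunasingam--Wong formula with $P_0=\sign(N)$, with the entropic term read off by continuity. I expect the main obstacle to be the reduction itself rather than the final calculus: establishing rigorously that bicausality---an infinite-dimensional, coupling-level constraint---collapses to ``$R$ diagonal'' for \emph{arbitrary}, not necessarily Gaussian, couplings, and that the entropic penalty cannot be lowered by leaving the Gaussian family; i.e.\ making the conditional-independence and max-entropy arguments precise and verifying the moment and integrability conditions that keep all expectations and the divergence finite and the identities valid.
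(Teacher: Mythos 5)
Your proposal is correct, and it reaches \eqref{eq:thm.AW.reg} by a genuinely different route than the paper. The paper first proves that $\AW_{2,\lambda}$ between non-degenerate Gaussians always admits a \emph{Gaussian} bi-causal optimizer (Theorem~\ref{thm.opt_eaot_gau}), via the dynamic programming principle, a lemma on entropic OT with degenerate quadratic cost, and stability of entropic optimal transport; only then does it characterize Gaussian bi-causal couplings as $\pi_P$ with $P$ (block) diagonal and optimize over $P$. You bypass that entire reduction: you derive the diagonality of the innovation cross-covariance $R=\E[\xi\eta^\top]$ for \emph{arbitrary} bi-causal couplings (the same conditional-expectation computation the paper uses inside the proof of Theorem~\ref{thm:gau_ad}, which indeed never uses Gaussianity of the coupling), and you control the entropic term of an arbitrary coupling by the Pythagorean identity $\DKL(\pi|\mu\otimes\nu)=\DKL(\pi|\pi_G)+\DKL(\pi_G|\mu\otimes\nu)\ge -\tfrac12\log\det(I-RR^\top)$, where $\pi_G$ is the Gaussian with the same first two moments. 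Since the quadratic cost depends only on those moments, this gives a lower bound over the relaxed set $\{R=\diag(\rho),\ |\rho_t|\le 1\}$ that is matched by an explicit product-of-bivariate-Gaussians coupling, which is bi-causal. Your route is shorter and self-contained for $d=1$, avoiding the DPP and stability machinery; the paper's route additionally delivers the structural results (Theorems~\ref{thm:gau_ad} and~\ref{thm.opt_eaot_gau}, the characterization of all optimizers) and carries over to $d>1$, where your separable scalar reduction must be supplemented by von Neumann's trace inequality over block contractions, exactly as in \eqref{eq:thm.eaw_dT.4.5}. Two points to nail down when writing it up: (i) if $|\rho_t|=1$ for some $t$ then $\xi_t=\pm\eta_t$ a.s., so $\pi$ is singular with respect to $\mu\otimes\nu$ and $\DKL(\pi|\mu\otimes\nu)=+\infty$, which is needed before invoking the Pythagorean identity (whose statement requires $\pi_G$ non-degenerate); and (ii) $\pi_G$ need not itself be bi-causal, but this is harmless since you only use $R$-diagonality as a necessary condition and close the loop with an attaining bi-causal coupling.
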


\begin{theorem}\label{thm:AW.reg_sol}
    In the setting of \Cref{thm:AW.reg}, for any diagonal matrix $P\in\R^{T\times T}$ s.t. $P_{tt} \in [-1,1]$, $t \leq T$, define
    \begin{equation}
        \label{eq:thm.AW.reg.optimizer}
        \pi_P \coloneqq
        \mathcal N\bigg(
        \begin{bmatrix}
            a \\
            b
        \end{bmatrix},
        \begin{bmatrix}        
            LL^\top & LP M^\top \\
            MP^\top L^\top & MM^\top
        \end{bmatrix}\bigg).
    \end{equation}
    Then we have:
    \begin{enumerate}[label = (\roman*)]
        \item If $\lambda > 0$, let $P = P_\lambda$. Then $\pi^*=\pi_P$ is the unique optimizer of $\AW_{2,\lambda}(\mu,\nu)$.
        \item If $\lambda = 0$, let $P_{tt} = (P_\lambda)_{tt}=\sign(N_{tt})$ if $N_{tt} \neq 0$, $t\leq T$. Then $\pi^*=\pi_P$ is an optimizer of $\AW_{2,\lambda}(\mu,\nu)$ and all Gaussian optimizers have such a representation. Moreover, $\pi^*$ is the unique optimizer if and only if $N$ is invertible.
    \end{enumerate}
\end{theorem}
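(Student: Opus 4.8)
The plan is to reduce the bi-causal Gaussian problem to a product of one-dimensional problems, exploit the explicit structure of the coupling family $\pi_P$, and then optimize over the diagonal matrix $P$.

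First I would establish that every bi-causal coupling of the two Gaussians can, without loss of optimality, be taken to be Gaussian of the form $\pi_P$ in \eqref{eq:thm.AW.reg.optimizer}. The Cholesky decompositions $A=LL^\top$ and $B=MM^\top$ are lower-triangular, which is exactly what encodes the filtration: writing $\mu$ and $\nu$ as pushforwards of standard Gaussian noise through $L$ and $M$ respectively, a bi-causal coupling corresponds to coupling the driving noises in a way that respects the time-ordering in both directions. I expect that the lower-triangular structure forces the optimal noise coupling to be \emph{time-by-time diagonal}, so that the cross-covariance block becomes $LPM^\top$ for a diagonal $P$ with $|P_{tt}|\le 1$. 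For the objective, I would compute the two pieces of \eqref{eq:AW.reg} explicitly on $\pi_P$: the transport cost $\int|x-y|^2\,d\pi_P$ expands to $|a-b|^2+\tr(A+B)-2\tr(LPM^\top)=|a-b|^2+\tr(A+B)-2\tr(P N)$ using $N=\diag([(M^\top L)_{tt}])$ and cyclicity of the trace, while the relative entropy $\DKL(\pi_P\,|\,\mu\otimes\nu)$ between two Gaussians with the same marginals reduces to a clean function of $P$, namely $-\tfrac12\log\det(I-P^2)$ up to the factor $\lambda$.

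Once the objective is written as
\[
F(P)=|a-b|^2+\tr(A+B)-2\tr(PN)-\tfrac{\lambda}{2}\log\det(I-P^2),
\]
the optimization decouples across the diagonal entries because both $\tr(PN)=\sum_t P_{tt}N_{tt}$ and $\log\det(I-P^2)=\sum_t\log(1-P_{tt}^2)$ are sums over $t$. Each scalar subproblem is exactly $\max_{x\in(-1,1)}2sx+\tfrac{\lambda}{2}\log(1-x^2)$ with $s=N_{tt}$, whose unique maximizer is $f_\lambda(N_{tt})$ by the remark preceding \Cref{thm:AW.reg}. This yields $P=P_\lambda=f_\lambda(N)$ and the value \eqref{eq:thm.AW.reg}, proving \Cref{thm:AW.reg} and part (i) of \Cref{thm:AW.reg_sol}; uniqueness for $\lambda>0$ follows from strict concavity of each scalar objective on $(-1,1)$ together with the earlier reduction to Gaussian couplings.

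For part (ii) with $\lambda=0$ the scalar problem degenerates to $\max_{x\in[-1,1]}2sx$, whose maximizer is $\sign(s)$ when $s\neq0$ and arbitrary in $[-1,1]$ when $s=0$; this immediately gives the stated form of the optimizer, the characterization of all Gaussian optimizers, and the equivalence between uniqueness and invertibility of $N$ (the free coordinates being precisely those with $N_{tt}=0$). The main obstacle I anticipate is the first step: rigorously showing that restricting to Gaussian couplings of the diagonal form $\pi_P$ loses no optimality, i.e.\ that a bi-causal coupling can be disintegrated time-step-by-time-step and that at each step the conditionally Gaussian structure is preserved and only the diagonal cross-term matters. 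I would handle this by an inductive disintegration along the filtration, using the bi-causality constraint to show that at each time $t$ the relevant conditional couplings are one-dimensional transports between Gaussians, for which the optimal (entropic) coupling is known to be Gaussian; the triangularity of $L$ and $M$ then guarantees that these per-step couplings assemble into the block matrix in \eqref{eq:thm.AW.reg.optimizer} with $P$ diagonal.
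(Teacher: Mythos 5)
Your overall architecture is the same as the paper's: characterize the Gaussian bi-causal couplings as $\pi_P$ with $P$ the (necessarily diagonal, by bi-causality of the normalized noises $L^{-1}(X-a)$, $M^{-1}(Y-b)$) cross-covariance contraction; show that a Gaussian optimizer exists by backward induction through the dynamic programming principle; compute transport cost and relative entropy on $\pi_P$; and decouple into the scalar problems $\max_{x\in(-1,1)}2sx+\tfrac{\lambda}{2}\log(1-x^2)$ solved by $f_\lambda(s)$. Parts (i) and (ii), including the uniqueness dichotomy in terms of invertibility of $N$, then follow exactly as you describe.

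The one step that would fail if executed as written is the inductive reduction. At time $t$ the conditional problem produced by the dynamic programming principle is \emph{not} the entropic optimal transport between the one-dimensional conditionals $\mu^t_{x_{\bt{t-1}}}$ and $\nu^t_{y_{\bt{t-1}}}$ with cost $|x_t-y_t|^2$: by the induction hypothesis the continuation value contributes a term $|\hat a_t-\hat b_t|^2$ with $\hat a_t-\hat b_t$ affine in $(x_{\bt{t}},y_{\bt{t}})$, so the effective per-step cost is a quadratic of the form $|x-\Lambda y|^2$ (up to constants) with a possibly singular matrix $\Lambda$. Coupling $x_t$ with $y_t$ optimally for the plain quadratic cost at each step would instead produce the Knothe--Rosenblatt coupling $P=I$, which is suboptimal whenever some $N_{tt}<0$. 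For the modified cost the closed-form Gaussian solution of entropic $\W_2$ is not directly quotable; the paper isolates this point in \Cref{lem.ewd2}, treating invertible $\Lambda$ by a change of variables and singular $\Lambda$ by perturbation to $\Lambda+\epsilon I$ combined with stability of entropic optimal transport and $\W_2$-closedness of the Gaussian family. One also needs, via \Cref{lem.cond_law_gau}, that the conditional covariances (hence the optimal conditional cross-covariances) do not depend on the conditioning point, so that the value function remains an explicit quadratic and the induction closes. With these two ingredients supplied, your plan coincides with the paper's proof.
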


In what follows, we draw comparisons between the results obtained here for the $\AW_{2,\lambda}$-distance 
and those contained in previous literature, especially regarding regularization, dimensionality, and adaptedness.

\medskip

\noindent $\diamond$ \emph{Regularized vs unregularized adapted Wasserstein distance.}
If $\lambda = 0$, $f_\lambda(x)x = |x|$ for all $x \in \R$. 
Then $\tr(f_\lambda(N)N) = \tr(|N|) = \tr(|M^\top L|) =  \tr(|L^\top M|)$ and we recover the closed-form solution of $\AW_2$ for Gaussians on $\R^T$ from  \cite[Theorem~1.1]{gunasingam2024adapted}. 
In the setting of \Cref{thm:AW.reg}, this reads as
\begin{equation}
        \AW^2_{2}(\mu,\nu) = |a - b|^2 +
        \tr\big(A + B) - 2\tr(|L^\top M|).
\end{equation}
When the matrix $N$ (c.f.\ \Cref{thm:AW.reg}) is not invertible, then $\AW_{2}(\mu,\nu)$ does not have a unique optimizer.
In this case there are also non-Gaussian optimizers; see Example~\ref{ex:1}. 
However, $\AW_{2,\lambda}(\mu,\nu)$ always has a unique, Gaussian optimizer when $\lambda > 0$, thanks to the strict convexity of the Kullback-Leibler divergence.
\medskip

\noindent $\diamond$ \emph{One-dimensional vs multi-dimensional adapted Wasserstein distance.}
The multi-dimensional generalization of Theorems~\ref{thm:AW.reg}-\ref{thm:AW.reg_sol}, that is, when $\mu$ and $\nu$ are Gaussian distributions on $\R^{dT}$ for $d \ge 1$, can be found in Theorems~\ref{thm.EAW_dT} and \ref{thm.EAW_dT_sol} below.
When $d = 1$, it was already observed in \cite{gunasingam2024adapted} that the Knothe-Rosenblatt coupling is an optimizer for $\AW_{2}(\mu,\nu)$ if and only if the diagonal entries of $L^\top M$ are non-negative. In the present setting, the Knothe-Rosenblatt coupling corresponds to the coupling defined in \eqref{eq:thm.AW.reg.optimizer} with $P = I$. For a detailed discussion of the Knothe-Rosenblatt coupling and its induced distance, we refer to \cite{beiglbock2023knothe}, where two generalizations are introduced{\color{black}; one based on defining the quantile process on $\R^{dT}$, and the second based on defining the triangular optimal couplings; see Section~1.7 in \cite{beiglbock2023knothe} for detailed definitions.} The second one is referred to as adapted Brenier coupling in \cite{gunasingam2024adapted}. Moreover, by \cite[Corollary 4.6]{gunasingam2024adapted} the Knothe-Rosenblatt distance coincides with $\AW_2$ locally at non-degenerate Gaussians.
On the other hand, when $d > 1$, there is no direct analogue to the Knothe-Rosenblatt coupling and the structure of $\AW_2$-optimal Gaussian couplings becomes more complex as the $dT \times dT$-analogue to the $T \times T$-diagonal matrix $P_\lambda$ is given by a block diagonal matrix.

\medskip

\noindent $\diamond$ \emph{Adapted vs classical (regularized) Wasserstein distance.}
The entropic 2-Wasserstein distance between non-degenerate Gaussians has a well-known closed-form solution, and the unique optimal coupling is also Gaussian; see \cite{mallasto2022entropy, del2020statistical, bojilov2016matching, janati2020entropic}.
In the setting of \Cref{thm:AW.reg}, this gives
\begin{equation} \label{eq:ewd}
    \W^2_{2,\lambda}(\mu, \nu) = |a - b |^2 + \tr(A+B) - 2\tr(C_\lambda) -\frac{\lambda}{2}\log\det(I - K_\lambda K_\lambda^\top),
\end{equation}
where the matrices $K_\lambda$ and $C_\lambda$ are given by
\begin{equation}
\label{eq:ewd_opl_cpl.0}
    K_\lambda = A^{-\frac{1}{2}}C_\lambda B^{-\frac{1}{2}},
    \quad 
    C_\lambda \coloneqq \frac{1}{2}A^{\frac{1}{2}}\Big(4A^{\frac{1}{2}}B A^{\frac{1}{2}} + \frac{\lambda^2}{4}I\Big)^{\frac{1}{2}} A^{-\frac{1}{2}} - \frac{\lambda}{4} I,
\end{equation}
and the unique optimal coupling is $\pi^* = 
\calN\bigg(\begin{bmatrix}
a\\
b
\end{bmatrix}, \begin{bmatrix}
    A & C_\lambda\\
    C_\lambda^\top & B
\end{bmatrix}\bigg)$.
When $T=1$, the optimal transport problem coincides with the adapted optimal transport problem, and in this case we have $\AW_{2,\lambda} = \W_{2,\lambda}$ and $\pi^*$ (given as above) coincides with the optimal coupling that is characterized in \Cref{thm.EAW_dT_sol}. 
On the other hand, when $T \geq 2$, the values of $\AW_{2,\lambda}$ and $\W_{2,\lambda}$ typically differ from each other.
From the formula of $\AW_{2,\lambda}$ and $\W_{2,\lambda}$, we obtain an entropic trace inequality; see \Cref{subsec.Aad_vs_class} for details. 
When $\lambda=0$, $\W_{2}$ has a unique, Gaussian optimal coupling between non-degenerate Gaussians. Moreover, the optimal coupling is a Monge optimal coupling with corresponding Monge map given by the linear transformation $x \mapsto b + A^{-\frac{1}{2}}(A^{\frac{1}{2}}B A^{\frac{1}{2}})^{\frac{1}{2}}A^{-\frac{1}{2}}(x-a)$. By Monge optimal coupling, we mean an optimal coupling supported on the graph of a map, which is in turn referred to as Monge map. On the other hand, when $\lambda =0$, $\AW_{2}$ may not have a unique optimal coupling. For $\AW_{2}$, there always exists a Monge optimal coupling, while there may also exist optimal couplings that are not of Monge type; see \Cref{subsec.Monge} for details.

\medskip

In this paper, we focus on the setting of non-degenerate Gaussian distributions, a common assumption in the study of entropic Wasserstein distances between Gaussians (see \cite{mallasto2022entropy}). Our closed-form expression for the entropic adapted Wasserstein distance between Gaussians is fundamentally based on the Cholesky decomposition. 
For degenerate Gaussians, this decomposition is not unique --—different Cholesky decompositions correspond to stochastic processes with the same law but different filtrations. The study of the degenerate case thus requires the use of different tools, in particular to work in the context of filtered processes, which is left for future work.
\medskip

Other papers investigating explicit solutions of causal and bi-causal transport problems are \cite{backhoff2017causal, backhoff2022adapted, ruschendorf1985wasserstein}  studying the Knothe-Rosenblatt coupling in discrete time, \cite{backhoff2020adapted, backhoff2022adapted, bion2019wasserstein, cont2024causal, lassalle2018causal, robinson2024bicausal} for continuous time SDEs, \cite{han2023distributionally, zorzi2020optimal} solving a related optimal transport problem between Gaussians, and
\cite{ramgraber2023ensemble, baptista2024conditional} constructing triangular maps as conditional Brenier maps between Gaussians.

\paragraph{Notations.}
We regard $\R^{dT}$ as the space of $d$-dimensional discrete-time paths with $T$ time steps, $x = (x_1,\dots,x_T)$, equipped with the Euclidean norm $|\cdot|$.
To refer to the subvector $(x_s,\dots,x_t)$ of $x$ we also write $x_{s:t}$.
For $t=1,\ldots, T-1$, we use the shorthand notations $x_{\bt{t}}=x_{1:t}$ and $x_{\btp{t}}=x_{t+1:T}$, and set $x_{\bt{T}}=x$. Moreover, for $\mu\in\mathcal{P}(\R^{dT})$, we denote the $t$-th marginal of $\mu$ by $\mu_t$, the up-to-time-$t$ marginal of $\mu$ by $\mu_{\bt{t}}$, and the kernel (disintegration) of $\mu$ w.r.t. $x_{\bt{t}}$ by $\mu_{x_{\bt{t}}}$, so that $\mu(dx_{\btp{t}}) = \int_{\R^{dt}}\mu_{x_{\bt{t}}}(dx_{\btp{t}}) \mu_{\bt{t}}(dx_{\bt{t}})$.
To denote the $x_s$-marginal of $\mu_{x_{\bt{t}}}$, we use the notation $\mu_{x_{\bt{t}}}^s(dx_s) = (x_{\bt{t}'} \mapsto x_s)_\# \mu_{x_{\bt{t}}}(dx_s)$, where $\#$ designates the push-forward operation.
For notational completeness, we let $\mu_{x_{0}} = \mu$. 
For $\mu,\nu \in\mathcal{P}(\R^{dT})$, we denote their set of couplings by $\cpl(\mu,\nu)=\{\pi\in\mathcal{P}(\R^{dT}\times\R^{dT}) : \pi(dx \times \R^{dT}) = \mu(dx), \pi(\R^{dT}\times dy) = \nu(dy) \}$, 
and for every coupling $\pi$ we use the analogous notations
$\pi_{t}$, $\pi_{\bt{t}}$, $\pi_{x_{\bt{t}},y_{\bt{t}}}$, $\pi^s_{x_{\bt{t}},y_{\bt{t}}}$, $\pi_{x_{0}, y_0}$.
For any matrix $M \in \R^{dT \times dT}$ and for $s,t\in \{1,\dots, T\}$, we denote the $s$-th (resp.\ up to $s$) row and $t$-th (resp.\ up to $t$) column blocks of $M$ by $M_{s,t} \in \R^{d\times d}$ (resp.\ $M_{\bt{s},\bt{t}} \in \R^{ds\times dt}$), so that
\[
M = \begin{bmatrix}
M_{1,1}  & \cdots & M_{1,T} \\
\vdots  & \ddots & \vdots \\
M_{T,1}  & \cdots & M_{T,T}
\end{bmatrix},\quad 
M_{\bt{s},\bt{t}} = \begin{bmatrix}
M_{1,1} & \cdots & M_{1,t} \\
\vdots & \ddots & \vdots \\
M_{s,1} & \cdots & M_{s,t}
\end{bmatrix}.
\]
If $M$ is block diagonal, meaning $M_{s,t}=0$ whenever $s\neq t$, with an abuse of notation we denote $M_t = M_{t,t}$, $t \in \{1,\dots, T\}$. We equip matrices with the spectral norm, denoted by $\Vert \cdot \Vert_2$.
For $f\colon \R\to \R$ and $M\in\R^{dT \times dT}$, we denote by $f(M)$ the element-wise function application of $f$ on $M$. Similarly, for any $x \in \R^{dT}$, we adopt the convention that $x = [x_1^\top, \ldots, x_T^\top]^\top$. For the sign function, we adopt the convention $\sign(0)=0$. Finally, the \textit{Kullback–Leibler divergence} is given by $\DKL(\mu|\nu) = \int \log({d\mu}/{d\nu})d\mu$ if $\mu \ll \nu$ and $+\infty$ otherwise.

\section{Main results}\label{sec.eawd}
We start by introducing the notion of bi-causal coupling, as a coupling for which, at every time, the conditional law of the future evolution given the past is still a coupling of the respective conditional laws of the marginals.
\begin{definition}
\label{def:biccpl}
    A coupling $\pi\in\cpl(\mu,\nu)$ is called bi-causal if, for all $t=1,\ldots T-1$ and $x_\bt{t}$, $y_\bt{t} \in \R^{dt}$, it holds that $\pi_{x_\bt{t},y_\bt{t}} \in \cpl(\mu_{x_\bt{t}}, \nu_{y_\bt{t}})$. In this case, we write $\pi\in\bccpl(\mu,\nu)$.
\end{definition}
The causality constraint can be expressed in different equivalent ways, see e.g.\ \cite{backhoff2017causal,acciaio2020causal} in the context of transport, and \cite{bremaud1978changes} in the filtration enlargement framework. In particular, the bi-causality condition in the above definition corresponds to having the following conditional independence for discrete-time processes $X,Y$ with joint distribution $\pi$:
\begin{equation*}
\text{for all $t=1,\ldots T-1$},\quad Y_t \perp X\; \text{given $X_{1:t}$},\quad \text{and }\, X_t \perp Y\; \text{given $Y_{1:t}$}.
\end{equation*}
This means that the transport is done in a non-anticipative way, both from $X$ to $Y$ and from $Y$ to $X$.
In the next theorem, we characterize all Gaussian bi-causal couplings.

\begin{theorem}[Characterization of Gaussian bi-causal couplings]
\label{thm:gau_ad}
    Let $\mu = \mathcal N(a,A)$ and $\nu = \mathcal N(b,B)$ be non-degenerate Gaussians on $\R^{dT}$, whose covariance matrices have Cholesky decompositions $A = LL^\top$ and $B = MM^\top$.
    A Gaussian coupling $\pi \in \cpl(\mu,\nu)$ is bi-causal if and only if there exists a block diagonal matrix $P = \diag(P_1,\dots,P_T) \in \R^{dT \times dT}$ s.t.\ $\pi=\pi_P$, where
    \begin{equation}
    \label{eq:gauss_ansatz}
    \pi_P := \calN\Big(
    \begin{bmatrix}
        a \\ b
    \end{bmatrix},
    \begin{bmatrix}
            LL^\top & LPM^\top \\
            M P^\top L^\top &MM^\top
        \end{bmatrix}\Big),
    \end{equation}
    and $P_{t} \in \R^{d \times d}$ are contractions (i.e. $\Vert P_{t} \Vert_2 \leq 1$), for all $t\leq T$.
\end{theorem}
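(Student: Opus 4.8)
The plan is to parametrize all Gaussian couplings through a single standardized cross-covariance matrix and then read off bi-causality as a block-triangularity condition on that matrix. Since $\mu,\nu$ are non-degenerate, $L$ and $M$ are invertible, so any Gaussian coupling $\pi\in\cpl(\mu,\nu)$ is of the form $\calN\big(\begin{bmatrix} a\\ b\end{bmatrix},\begin{bmatrix} A & C\\ C^\top & B\end{bmatrix}\big)$ for some cross-covariance $C$, and setting $P:=L^{-1}CM^{-\top}$ gives $C=LPM^\top$, hence $\pi=\pi_P$. Writing the joint covariance as $\begin{bmatrix} L&0\\0&M\end{bmatrix}\begin{bmatrix} I&P\\P^\top&I\end{bmatrix}\begin{bmatrix} L&0\\0&M\end{bmatrix}^\top$, positive semi-definiteness is equivalent to $\begin{bmatrix} I&P\\P^\top&I\end{bmatrix}\succeq 0$, i.e.\ to $\Vert P\Vert_2\le 1$. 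Thus Gaussian couplings are in bijection with contractions $P$, and it remains to identify which $P$ yield bi-causal $\pi_P$. To this end I would realize $\pi_P$ via the innovation representation $X=a+L\xi$, $Y=b+M\eta$, where $(\xi,\eta)$ is centered Gaussian with $\cov(\xi)=\cov(\eta)=I$ and $\cov(\xi,\eta)=P$.

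Next I would rewrite bi-causality using the conditional-independence reformulation recorded below Definition~\ref{def:biccpl}: $\pi_P$ is bi-causal iff for every $t$ one has $X_{\btp{t}}\perp Y_{\bt{t}}\mid X_{\bt{t}}$ and $Y_{\btp{t}}\perp X_{\bt{t}}\mid Y_{\bt{t}}$. Because $L$ is block lower triangular with invertible diagonal blocks, the leading block $L_{\bt{t},\bt{t}}$ is invertible, so $X_{\bt{t}}=a_{\bt{t}}+L_{\bt{t},\bt{t}}\xi_{\bt{t}}$ generates the same information as $\xi_{\bt{t}}$; likewise $Y_{\bt{t}}\leftrightarrow\eta_{\bt{t}}$. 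Moreover, conditionally on $\xi_{\bt{t}}$ the deviation of $X_{\btp{t}}$ from its conditional mean equals the invertible image $L_{\btp{t},\btp{t}}\xi_{\btp{t}}$ of $\xi_{\btp{t}}$. Hence the first condition is equivalent to $\xi_{\btp{t}}\perp\eta_{\bt{t}}\mid\xi_{\bt{t}}$, and since all vectors are jointly Gaussian this conditional independence holds iff the conditional cross-covariance vanishes.

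The final step is the covariance computation. As $\cov(\xi_{\btp{t}},\xi_{\bt{t}})=0$, the Gaussian conditioning formula gives $\cov(\xi_{\btp{t}},\eta_{\bt{t}}\mid\xi_{\bt{t}})=\cov(\xi_{\btp{t}},\eta_{\bt{t}})$, which is exactly the block $P_{\btp{t},\bt{t}}$ of $P$. Requiring this to vanish for all $t$ forces every sub-diagonal block of $P$ to be zero, i.e.\ $P$ is block upper triangular. Running the same argument with $\xi$ and $\eta$ interchanged (so that $P$ is replaced by $P^\top$) turns the second family of conditions into $P$ block lower triangular. Together these give $P=\diag(P_1,\dots,P_T)$ block diagonal, and then $\Vert P\Vert_2=\max_t\Vert P_t\Vert_2\le 1$ shows each $P_t$ is a contraction. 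Conversely, for block diagonal $P$ with contraction blocks all the conditional cross-covariances above vanish, so both families of conditional independences hold and $\pi_P$ is bi-causal; this establishes the claimed equivalence.

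I expect the main obstacle to be the bookkeeping in the middle step: carefully justifying that conditioning on $X_{\bt{t}}$ may be replaced by conditioning on $\xi_{\bt{t}}$, that the conditional law of $X_{\btp{t}}$ is driven invertibly only by $\xi_{\btp{t}}$, and that for jointly Gaussian vectors the stated conditional independence is equivalent to the vanishing of the single off-diagonal block of $P$. All of these hinge on the block-triangular structure of the Cholesky factors together with the invertibility guaranteed by non-degeneracy.
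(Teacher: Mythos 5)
Your proposal is correct and takes essentially the same route as the paper: both standardize the coupling via $L^{-1}$ and $M^{-1}$ to reduce to a correlation matrix $\begin{bmatrix} I & P \\ P^\top & I\end{bmatrix}$ with $P = L^{-1}CM^{-\top}$, use the block-lower-triangularity of the Cholesky factors to transfer bi-causality to the standardized variables, and conclude that the off-diagonal blocks of $P$ vanish. The only cosmetic difference is that you establish this vanishing through the Gaussian conditional cross-covariance formula, whereas the paper does the same computation entrywise via the tower property and causality.
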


\begin{proof}
    If $\pi \in \bccpl(\mu,\nu)$ is Gaussian, then there exists $C\in \R^{dT \times dT}$ s.t.\ $\begin{bmatrix}
            LL^\top & C \\
            C^\top & MM^\top
        \end{bmatrix}$ is positive definite and $\pi = \calN\Big(
        \begin{bmatrix}
            a \\
            b
        \end{bmatrix},
        \begin{bmatrix}
            LL^\top & C \\
            C^\top & MM^\top
        \end{bmatrix}\Big)$.
    Let $(X,Y) \sim \pi$. 
    Since $L$ and $M$ are invertible, we can define
    \begin{equation*}
        \begin{bmatrix}
            Z^X \\
            Z^Y
        \end{bmatrix}
        \coloneqq
        \begin{bmatrix}
            L^{-1} & 0 \\
            0 & M^{-1}
        \end{bmatrix}
        \begin{bmatrix}
            X - a \\
            Y - b
        \end{bmatrix}
        \sim \calN\Big(
        \begin{bmatrix}
            0 \\
            0
        \end{bmatrix},
        \begin{bmatrix}
            I & P \\
            P^\top & I
        \end{bmatrix}\Big) \eqqcolon \pi^{Z},
    \end{equation*} 
    where $P = L^{-1}C(M^{-1})^\top$.
    By bi-causality of $\pi$ and since $\begin{bmatrix}
            L^{-1} & 0 \\
            0 & M^{-1}
        \end{bmatrix}$ is invertible and lower triangular,
    $\pi^{Z}$ is also bi-causal and Gaussian. Then, for all $s < t, s,t \leq T$, $i,j \leq d$, 
    \begin{equation*}
        (P_{s,t})_{ij} = \E[Z^X_{s,i}Z^Y_{t,j}] =\E[Z^X_{s,i}\E[Z^Y_{t,j} | Z^X_\bt{t-1},Z^Y_\bt{t-1}]] = \E[Z^X_{s,i}\E[Z^Y_{t,j} |Z^Y_\bt{t-1}]] = \E[Z^X_s \cdot 0 ] = 0,
    \end{equation*}
    where the second equality follows by the tower property, the third one by causality, and the forth one by $Z^Y \sim \calN(0,I)$. Similarly, by symmetry, we have $(P_{s,t})_{ij} =0$ for all $s,t \leq T, s > t, i,j \leq d$. Thus, $P$ is block diagonal, i.e. $P = \diag(P_1,\dots,P_T)$ with $P_t \in \R^{d \times d}$. Finally, $\begin{bmatrix}
        I & P \\
        P^\top & I
        \end{bmatrix}$
    is positive-definite, which is equivalent to say that $P$ is a contraction, by \cite[Proposition~1.3.1]{bhatia2009positive}. 
    In turn, this corresponds to $(P_1,\dots,P_T)$ being all contractions, as $P$ is block diagonal. 
    The reverse implication is shown by following the arguments above in reverse order, as these are all equivalences.
\end{proof}

In general, couplings of Gaussian marginals need not be Gaussian. So an optimal coupling of $\AW_{2,\lambda}(\mu,\nu)$ is not necessarily Gaussian. 
In particular, this is the case when the optimizer is non-unique; see also Example~\ref{ex:2} below. 
Nonetheless, it turns out that $\AW_{2,\lambda}(\mu,\nu)$ always admits a Gaussian bi-causal optimal coupling.
\begin{theorem}[Existence of Gaussian bi-causal optimizers]
\label{thm.opt_eaot_gau}
Let $\mu = \mathcal N(a,A)$ and $\nu = \mathcal N(b,B)$ be non-degenerate Gaussians on $\R^{dT}$, whose covariance matrices have Cholesky decompositions $A = LL^\top$ and $B = MM^\top$, and let $\lambda \ge 0$. Then $\AW_{2,\lambda}(\mu,\nu)$ has a Gaussian optimal coupling.
\end{theorem}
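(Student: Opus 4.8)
The plan is to reduce the infimum over all bi-causal couplings to an infimum over \emph{Gaussian} bi-causal couplings, which by \Cref{thm:gau_ad} form a compact finite-dimensional family, and then to show that this restricted minimum is in fact a global minimum over $\bccpl(\mu,\nu)$.

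First I would solve the restricted problem. By \Cref{thm:gau_ad}, every Gaussian bi-causal coupling equals $\pi_P$ for a block-diagonal contraction $P = \diag(P_1,\dots,P_T)$, and the set $\mathcal K$ of such $P$ is compact (a product of closed unit balls for $\Vert\cdot\Vert_2$). Regarded as a function of $P$, the objective $J(P) := \int |x-y|^2\,d\pi_P + \lambda\,\DKL(\pi_P|\mu\otimes\nu)$ is continuous wherever $\pi_P$ is non-degenerate, and for $\lambda>0$ it tends to $+\infty$ as $P$ approaches the boundary of $\mathcal K$, where $\pi_P$ degenerates and the relative entropy blows up; for $\lambda=0$ it is simply a continuous quadratic in $P$. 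Since the product coupling $P=0$ gives a finite value, in either case $J$ is lower semicontinuous and coercive toward the boundary, so it attains its minimum at some $P^*\in\mathcal K$, yielding a Gaussian bi-causal coupling $\pi_{P^*}$.

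Next I would show $\pi_{P^*}$ is optimal among \emph{all} bi-causal couplings by a Gaussianization argument. Given any bi-causal $\pi$ with finite objective, let $\pi^G$ be the Gaussian measure with the same mean and covariance as $\pi$. Two facts drive the proof. (i) The quadratic cost $\int|x-y|^2\,d\pi$ depends only on the first two moments of $\pi$, hence is unchanged by $\pi\mapsto\pi^G$; and since $\mu\otimes\nu$ is Gaussian, the Pythagorean identity $\DKL(\pi|\mu\otimes\nu) = \DKL(\pi|\pi^G) + \DKL(\pi^G|\mu\otimes\nu)$ holds, because $\log(d\pi^G/d(\mu\otimes\nu))$ is a quadratic whose $\pi$- and $\pi^G$-integrals coincide; this yields $\DKL(\pi^G|\mu\otimes\nu)\le\DKL(\pi|\mu\otimes\nu)$, so the objective does not increase under Gaussianization. (ii) $\pi^G$ is again bi-causal. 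Passing to whitened coordinates $Z^X = L^{-1}(X-a)$ and $Z^Y = M^{-1}(Y-b)$ is an invertible, lower-block-triangular change of variables that preserves the filtrations, $\sigma(X_{1:t})=\sigma(Z^X_{1:t})$ and likewise for $Y$, so bi-causality transfers from $(X,Y)$ to $(Z^X,Z^Y)$; then the cross-covariance $P=\E[Z^X(Z^Y)^\top]$ must be block diagonal. This is verbatim the computation in the proof of \Cref{thm:gau_ad}, which used only the causal conditional independence together with the fact that the $Y$-marginal whitens to $\calN(0,I)$ with independent coordinates — both available for arbitrary, not necessarily Gaussian, bi-causal $\pi$, since $\nu$ is Gaussian. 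Positive semidefiniteness of the covariance forces each $P_t$ to be a contraction, so by \Cref{thm:gau_ad} we have $\pi^G=\pi_P$ with $P\in\mathcal K$.

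Combining, for any bi-causal competitor $\pi$ we obtain $J(P^*)\le J(P)=(\text{objective of }\pi^G)\le(\text{objective of }\pi)$, so $\pi_{P^*}$ minimizes over all of $\bccpl(\mu,\nu)$ and is Gaussian, proving the claim. I expect the main obstacle to be step (ii), the preservation of bi-causality under Gaussianization. The key insight making it work is that bi-causality constrains a Gaussian coupling \emph{only} through the block-diagonality of its whitened cross-covariance, and this block-diagonality is already forced by the conditional-independence definition for general couplings — precisely because the marginals, being Gaussian by hypothesis, whiten to standard Gaussians irrespective of the joint law.
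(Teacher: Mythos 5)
Your proof is correct, but it takes a genuinely different route from the paper. The paper proceeds by backward induction via the dynamic programming principle (\Cref{prop.dpp}): using \Cref{lem.cond_law_gau}, each one-period conditional problem is an entropic optimal transport problem between Gaussians with a possibly degenerate quadratic cost $|x-\Lambda y|^2$, and \Cref{lem.ewd2} (proved by a change of variables when $\Lambda$ is invertible and by a stability/limiting argument otherwise) supplies a Gaussian one-step optimizer with covariance independent of the conditioning variables; these kernels are then composed into a Gaussian optimal coupling. You instead argue globally: minimize over the compact family $\{\pi_P : P \text{ block-diagonal contraction}\}$ from \Cref{thm:gau_ad}, and then show any bi-causal competitor $\pi$ is dominated by its moment-matched Gaussianization $\pi^G$, since the quadratic cost depends only on second moments, the Pythagorean identity $\DKL(\pi|\mu\otimes\nu)=\DKL(\pi|\pi^G)+\DKL(\pi^G|\mu\otimes\nu)$ gives the entropy inequality, and — the crucial observation — the cross-covariance computation in the proof of \Cref{thm:gau_ad} uses only the conditional-independence form of bi-causality together with $Z^Y\sim\calN(0,I)$, so it forces block-diagonality of the whitened cross-covariance of an \emph{arbitrary} bi-causal coupling of Gaussian marginals, whence $\pi^G=\pi_P$ is bi-causal by the converse direction of \Cref{thm:gau_ad}. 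Your argument is more elementary in that it avoids the dynamic programming principle, the stability theorem for entropic optimal transport, and the closed form \eqref{eq:ewd.optimizer}; the paper's induction, on the other hand, yields the optimizer's conditional (kernel-by-kernel) structure, which is reused elsewhere. One point you should make explicit: when the matched covariance is singular, $\pi^G$ has no density and the Pythagorean identity as written does not apply; this is harmless because then $\pi$ itself is supported on a proper affine subspace, so $\DKL(\pi|\mu\otimes\nu)=+\infty$ for $\lambda>0$, while for $\lambda=0$ only the (moment-determined) quadratic cost matters.
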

We postpone the proof of \Cref{thm.opt_eaot_gau} to \Cref{sec.proofthm} and head to our main theorems.
\begin{theorem} [Closed-form representation]
\label{thm.EAW_dT}
Let $\mu = \mathcal N(a,A)$ and $\nu = \mathcal N(b,B)$ be non-degenerate Gaussians on $\R^{dT}$, whose covariance matrices have Cholesky decompositions $A = LL^\top$ and $B = MM^\top$, and let $\lambda \ge 0$. Then
\begin{equation}
\label{eq:thm.eaw_dT.0}
    \begin{split}
        \AW_{2,\lambda}^2(\mu,\nu)
        &= |a-b|^2 + \tr(A + B) - 2\tr(D_{\lambda} S) - \frac{\lambda}{2}\log\det(I - D_{\lambda}^2),
    \end{split}
\end{equation}
where $D_{\lambda} = f_\lambda(S)$ and $S = \diag(S_1,\dots,S_T)$, with $S_{t}$ being the diagonal matrix of singular values of $(M^\top L)_{t,t}$. In particular, when $\lambda = 0$, $f_\lambda$ is the sign function, so \eqref{eq:thm.eaw_dT.0} reduces to
\begin{equation*}
    \begin{split}
        \AW_{2}^2(\mu,\nu)
        &= |a-b|^2 + \tr(A + B) - 2\tr(S).
    \end{split}
\end{equation*}
Moreover, we have $\lim_{\lambda \to 0}\AW_{2,\lambda}^2(\mu,\nu) = \AW_{2}^2(\mu,\nu)$.
\end{theorem}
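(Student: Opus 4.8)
The plan is to combine the two structural results just established to turn an infinite-dimensional variational problem into a finite-dimensional one. By \Cref{thm.opt_eaot_gau} the infimum defining $\AW_{2,\lambda}^2(\mu,\nu)$ is attained at a Gaussian bi-causal coupling, and by \Cref{thm:gau_ad} every Gaussian bi-causal coupling equals $\pi_P$ for some block-diagonal contraction $P=\diag(P_1,\dots,P_T)$. Since conversely each such $\pi_P$ is an admissible competitor, this yields $\AW_{2,\lambda}^2(\mu,\nu)=\min_P J(P)$, where $J(P)$ denotes the objective in \eqref{eq:AW.reg} evaluated at $\pi_P$ and the minimum ranges over block-diagonal $P$ with each $P_t\in\R^{d\times d}$ a contraction.

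First I would evaluate $J(P)$ in closed form. The transport cost is a routine second-moment computation giving $\int|x-y|^2\,d\pi_P=|a-b|^2+\tr(A+B)-2\tr\big((M^\top L)P\big)$. For the entropy term I would whiten both marginals through the invertible affine map $(x,y)\mapsto(L^{-1}(x-a),M^{-1}(y-b))$, under which $\pi_P$ is pushed to $\calN\bigl(0,\bigl[\begin{smallmatrix} I & P\\ P^\top & I\end{smallmatrix}\bigr]\bigr)$ and $\mu\otimes\nu$ to $\calN(0,I)$; as relative entropy is invariant under a common bijection, $\DKL(\pi_P\mid\mu\otimes\nu)=-\tfrac12\log\det(I-P^\top P)$. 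Hence $J(P)=|a-b|^2+\tr(A+B)-2\tr\big((M^\top L)P\big)-\tfrac{\lambda}{2}\log\det(I-P^\top P)$.

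Because $P$ is block-diagonal, both the trace and the log-determinant decouple across time, $\tr((M^\top L)P)=\sum_t\tr((M^\top L)_{t,t}P_t)$ and $\log\det(I-P^\top P)=\sum_t\log\det(I-P_t^\top P_t)$, so it suffices to maximize, for each $t$, the block functional $g(P_t)=2\tr((M^\top L)_{t,t}P_t)+\tfrac{\lambda}{2}\log\det(I-P_t^\top P_t)$ over contractions. Substituting the singular value decomposition $(M^\top L)_{t,t}=U_tS_tV_t^\top$ and setting $Q_t=V_t^\top P_tU_t$ (still a contraction, with $\det(I-P_t^\top P_t)=\det(I-Q_t^\top Q_t)$) reduces this to maximizing $2\tr(S_tQ_t)+\tfrac{\lambda}{2}\log\det(I-Q_t^\top Q_t)$ with $S_t$ diagonal. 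The step I expect to need the most care is showing the optimal $Q_t$ is diagonal: since $S_t$ is diagonal the linear term sees only the diagonal of $Q_t$, while by Hadamard's inequality $\det(I-Q_t^\top Q_t)\le\prod_i\big(1-(Q_t^\top Q_t)_{ii}\big)\le\prod_i\big(1-(Q_t)_{ii}^2\big)$, the right-hand side being exactly the determinant obtained by discarding the off-diagonal entries. Hence replacing $Q_t$ by its diagonal part — which remains a contraction — leaves the linear term unchanged and does not decrease the log term.

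Once restricted to diagonal $Q_t$, the block problem splits into the scalar problems $\max_{q\in(-1,1)}2\sigma q+\tfrac{\lambda}{2}\log(1-q^2)$ over the singular values $\sigma$ of $(M^\top L)_{t,t}$, whose optimizer is $f_\lambda(\sigma)$ by the definition of $f_\lambda$ recalled before \Cref{thm:AW.reg}. Re-assembling the optimal blocks yields $D_\lambda=f_\lambda(S)$ and, after re-summation, the claimed identity with the terms $-2\tr(D_\lambda S)-\tfrac{\lambda}{2}\log\det(I-D_\lambda^2)$. The case $\lambda=0$ is then immediate, as $f_0=\sign$, the log term vanishes, and $\tr(D_0 S)=\tr(S)$ since the singular values are nonnegative. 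Finally, for the limit $\lim_{\lambda\to0}\AW_{2,\lambda}^2=\AW_2^2$ I would only need $\tfrac{\lambda}{2}\log\det(I-D_\lambda^2)\to0$, which follows from the identity $1-f_\lambda(\sigma)^2=\tfrac{\lambda}{2\sigma}f_\lambda(\sigma)$, making each diagonal contribution of order $\lambda\log\lambda\to 0$.
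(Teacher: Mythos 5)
Your proposal is correct, and its skeleton coincides with the paper's: existence of a Gaussian optimizer (\Cref{thm.opt_eaot_gau}), parametrization of Gaussian bi-causal couplings as $\pi_P$ with $P$ a block-diagonal contraction (\Cref{thm:gau_ad}), the same evaluation of the quadratic cost and of $\DKL(\pi_P|\mu\otimes\nu)=-\tfrac12\log\det(I-PP^\top)$ by whitening, the same decoupling over $t$, and the same substitution $Q_t=V_t^\top P_tU_t$ via the SVD of $(M^\top L)_{t,t}$. The one genuinely different step is how you pass from the block problem to the scalar problems: the paper bounds the linear term by von Neumann's trace inequality $\tr(Q_tS_t)\le\sum_i s_{t,i}\sigma_i(Q_t)$ and expresses $\det(I-Q_tQ_t^\top)$ exactly through the singular values $\sigma_i(Q_t)$, whereas you keep the linear term exact (it depends only on the diagonal of $Q_t$ since $S_t$ is diagonal) and bound the determinant via Hadamard's inequality combined with $(Q_t^\top Q_t)_{ii}\ge(Q_t)_{ii}^2$. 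Both are valid; yours is more elementary and makes the ``restrict to diagonal $Q_t$'' reduction explicit, while the paper's singular-value formulation is the one it then reuses to characterize \emph{all} optimizers in \Cref{thm.EAW_dT_sol} (your route would need the equality cases of Hadamard for that, but for the value of the distance this is irrelevant). Two small remarks: for $\lambda=0$ the scalar maximization should be taken over the closed interval $[-1,1]$, attained at $q=1$ when $\sigma>0$, consistent with $f_0=\sign$ and giving $\tr(D_0S)=\tr(S)$; and for the limit $\lambda\to0$ you should also note $\tr(D_\lambda S)\to\tr(S)$ (immediate from $f_\lambda(\sigma)\to1$ for $\sigma>0$), in addition to the vanishing of the entropy term, for which your identity $1-f_\lambda(\sigma)^2=\tfrac{\lambda}{2\sigma}f_\lambda(\sigma)$ is correct.
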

\begin{theorem}[Characterization of optimizers]
\label{thm.EAW_dT_sol}
In the setting of 
\Cref{thm.EAW_dT}, let $U_t S_t V_t^\top$ be the singular value decomposition of $(M^\top L)_{t,t}$, $t=1,\dots,T$, and, for any block diagonal contraction matrix $P\in\R^{dT \times dT}$, define $\pi_P$ by \eqref{eq:gauss_ansatz}.
Then we have:
    \begin{enumerate}[label = (\roman*)]
        \item If $\lambda > 0$, let $P_t =  V_t^\top(D_{\lambda})_t U_t$, $t=1,\dots,T$.
        Then $\pi^* = \pi_P$ is the unique optimizer of $\AW_{2,\lambda}(\mu,\nu)$.
        \item If $\lambda = 0$, let $P_t = V_t^\top D_t U_t$ where $D_t$ is a contraction with $(D_{t})_{ii} = 1$ if $(S_t)_{ii} \neq 0$, $i = 1,\dots, d$, $t=1,\dots,T$. 
        Then $\pi^* = \pi_P$ is an optimizer of $\AW_{2}(\mu,\nu)$ and all Gaussian optimizers have such a representation. Moreover, $\pi^*$ is the unique optimizer if and only if $S$ is invertible.
    \end{enumerate}
\end{theorem}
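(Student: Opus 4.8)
The plan is to reduce the multidimensional problem to a collection of decoupled one-dimensional entropic optimization problems, exploiting the characterization of Gaussian bi-causal couplings from \Cref{thm:gau_ad} together with the existence of a Gaussian optimizer from \Cref{thm.opt_eaot_gau}. First I would observe that since a Gaussian optimizer exists, it suffices to minimize the objective in \eqref{eq:AW.reg} over the parametrized family $\pi_P$ with $P = \diag(P_1,\dots,P_T)$ a block diagonal contraction. So the first step is to compute both the transport cost $\int|x-y|^2\,d\pi_P$ and the relative entropy $\DKL(\pi_P\,|\,\mu\otimes\nu)$ explicitly as functions of $P$. For the quadratic cost, using $\pi_P$ from \eqref{eq:gauss_ansatz} and standard Gaussian moment formulas, one gets $\int|x-y|^2\,d\pi_P = |a-b|^2 + \tr(A+B) - 2\tr(LPM^\top) = |a-b|^2 + \tr(A+B) - 2\tr(M^\top L\,P)$. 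For the entropy term, I would reduce to the whitened coordinates $\pi^Z = \calN\big(0, \left[\begin{smallmatrix} I & P \\ P^\top & I\end{smallmatrix}\right]\big)$ used in the proof of \Cref{thm:gau_ad}, since $\DKL$ is invariant under the invertible linear change of variables $\diag(L^{-1},M^{-1})$, and note $\mu\otimes\nu$ whitens to $\calN(0,I_{2dT})$; the relative entropy of a centered Gaussian against the standard one is $-\tfrac12\log\det$ of its covariance, giving $\DKL(\pi_P\,|\,\mu\otimes\nu) = -\tfrac12\log\det(I - P^\top P)$.

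Combining these, the objective becomes
\begin{equation*}
\AW_{2,\lambda}^2(\mu,\nu) = |a-b|^2 + \tr(A+B) + \inf_{P}\Big[-2\tr(M^\top L\,P) - \tfrac{\lambda}{2}\log\det(I - P^\top P)\Big],
\end{equation*}
where the infimum runs over block diagonal contractions $P = \diag(P_1,\dots,P_T)$. The key structural point is that both the trace term and the $\log\det$ term decouple across the diagonal blocks: writing $N_t \defeq (M^\top L)_{t,t}$, one has $\tr(M^\top L\,P) = \sum_t \tr(N_t P_t)$ because only the diagonal blocks of $M^\top L$ pair with the block diagonal $P$, and $\log\det(I - P^\top P) = \sum_t \log\det(I - P_t^\top P_t)$. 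Hence the minimization splits into $T$ independent problems, one per block, each of the form $\max_{P_t}\big[2\tr(N_t P_t) + \tfrac{\lambda}{2}\log\det(I - P_t^\top P_t)\big]$ over contractions $P_t\in\R^{d\times d}$.

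Next I would solve each block problem. Using the singular value decomposition $N_t = (M^\top L)_{t,t} = U_t S_t V_t^\top$ and the substitution $Q_t = U_t^\top P_t V_t$ (which preserves the contraction constraint and the $\log\det$ term), the block objective becomes $2\tr(S_t Q_t) + \tfrac{\lambda}{2}\log\det(I - Q_t^\top Q_t)$. Since $S_t$ is diagonal with nonnegative entries, I expect the optimal $Q_t$ to be diagonal, which I would justify by a symmetrization or rearrangement argument (e.g.\ averaging over sign flips, or a von Neumann trace-inequality style bound showing off-diagonal entries only hurt). Once diagonality is established, the problem decomposes further into scalar problems $\max_{x\in(-1,1)} 2(S_t)_{ii}x + \tfrac{\lambda}{2}\log(1-x^2)$, whose unique maximizer is precisely $f_\lambda((S_t)_{ii})$ by the definition of $f_\lambda$ recalled before \Cref{thm:AW.reg}. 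This identifies the optimal $Q_t = f_\lambda(S_t) = (D_\lambda)_t$, hence $P_t = U_t(D_\lambda)_t V_t^\top$; substituting back gives $\tr(N_t P_t) = \tr(S_t (D_\lambda)_t) = \tr((D_\lambda)_t S_t)$ and $\log\det(I - P_t^\top P_t) = \log\det(I - (D_\lambda)_t^2)$, which assemble into the claimed formula \eqref{eq:thm.eaw_dT.0}. For \Cref{thm.EAW_dT_sol}, the optimal $P_t$ is read off directly; note the convention there writes $P_t = V_t^\top(D_\lambda)_t U_t$, so I would be careful to match the SVD orientation conventions and transposes, as this is an easy place to introduce an error.

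The remaining points are the uniqueness and limiting claims. For $\lambda>0$, strict concavity of $x\mapsto 2sx + \tfrac{\lambda}{2}\log(1-x^2)$ on $(-1,1)$ makes the scalar maximizer unique, and combined with strict convexity of $\DKL$ this forces a unique Gaussian optimizer; since any optimizer must be Gaussian here (strict convexity of the full regularized functional), $\pi^*$ is globally unique. For $\lambda=0$ the entropy term vanishes and the scalar problem $\max 2sx$ over $x\in[-1,1]$ has maximizer $x=\sign(s)$, forced only when $s\neq0$ and free in $[-1,1]$ when $s=0$; this yields exactly the contraction $D_t$ with $(D_t)_{ii}=1$ when $(S_t)_{ii}\neq0$ described in (ii), and uniqueness among Gaussian optimizers holds iff every singular value is nonzero, i.e.\ iff $S$ is invertible. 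Finally, $\lim_{\lambda\to0}\AW_{2,\lambda}^2 = \AW_2^2$ follows from pointwise convergence $f_\lambda(s)\to\sign(s)$ and continuity of the formula, noting the $\log\det$ term vanishes in the limit. The main obstacle I anticipate is rigorously establishing that the optimal $Q_t$ may be taken diagonal in the multidimensional block problem; everything else is a clean reduction to the scalar case already handled by the definition of $f_\lambda$.
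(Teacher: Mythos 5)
Your proposal follows essentially the same route as the paper's proof: restrict to Gaussian bi-causal couplings $\pi_P$ via \Cref{thm.opt_eaot_gau} and \Cref{thm:gau_ad}, compute the cost and the entropy in whitened coordinates, decouple the objective across the diagonal blocks, and reduce each block problem to scalar problems through the SVD. The diagonality step you flag as the main obstacle is handled in the paper exactly as you suggest: von Neumann's trace inequality gives $\tr(D_tS_t)\le\sum_i s_{t,i}\,\sigma_i(D_t)$ while $\log\det(I-D_tD_t^\top)$ depends only on the singular values of $D_t$, so the resulting upper bound is attained by the diagonal choice $D_t=f_\lambda(S_t)$, and the rest of your argument (uniqueness via strict convexity for $\lambda>0$, the free directions when $s_{t,i}=0$ for $\lambda=0$) matches the paper's.
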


\begin{proof}[Proof of Theorems~\ref{thm.EAW_dT}-\ref{thm.EAW_dT_sol}]
W.l.g.\ we assume $a=b=0$. By \Cref{thm.opt_eaot_gau}, there exists at least one optimal Gaussian bi-causal coupling. By \Cref{thm:gau_ad}, for any such coupling $\pi$, there exists a block diagonal matrix $P = \diag(P_1,\dots,P_T)$ with $\Vert P_t \Vert_2 \leq 1$ s.t.\ $\pi = \pi_P$ given by \eqref{eq:gauss_ansatz}.
First, we compute the transport cost under $\pi$, which is given by
\begin{equation}
\label{eq:thm.eaw_dT.1}
\begin{split}
    \int \Vert x - y \Vert^2 d\pi &= \tr(A+B) - 2\tr(LPM^\top) = \tr(A+B) - 2\tr(P M^\top L).
\end{split}
\end{equation}
Next, we compute the relative entropy of $\pi$ w.r.t.\ $\mu \otimes \nu$:
\begin{equation}
\label{eq:thm.eaw_dT.2}
\begin{split}
    \DKL(\pi|\mu\otimes \nu) 
     &= \frac{1}{2}\Big(\log\det \begin{bmatrix}
        LL^\top & 0 \\
        0 & MM^\top
    \end{bmatrix}- \log\det \begin{bmatrix}
        LL^\top & LPM^\top \\
        MP^\top L^\top & MM^\top
    \end{bmatrix}\Big)\\
    &= \frac{1}{2}\Big(\log\det \begin{bmatrix}
        I & 0 \\
        0 & I
    \end{bmatrix}- \log\det \begin{bmatrix}
        I & P \\
        P^\top & I\\
    \end{bmatrix}\Big)
    = -\frac{1}{2}\log\det(I - PP^\top).
\end{split}
\end{equation}
Combining \eqref{eq:thm.eaw_dT.1} and \eqref{eq:thm.eaw_dT.2}, we arrive at the total cost
\begin{equation}
\label{eq:thm.eaw_dT.3}
\begin{split}
  \hspace{-0.1cm}  \int \Vert x - y \Vert^2 d\pi + \lambda \DKL(\pi|\mu\otimes \nu) &= \tr(A+B) - 2\tr(P M^\top L) -\frac{\lambda}{2}\log\det(I - PP^\top)\\
    &= \tr(A+B) - \sum_{t=1}^{T}2\tr(P_t N_t) - \frac{\lambda}{2}\log\det(I - P_tP_t^\top),
\end{split}
\end{equation}
where we define $N_t \coloneqq (M^\top L)_{t,t}$, $t = 1,\dots,T$. 
Clearly, to minimize the cost in \eqref{eq:thm.eaw_dT.3}, we only need to equivalently consider, for all $t=1,\dots,T$, the maximization problem
\begin{equation}
\label{eq:thm.eaw_dT.3.5}
    \begin{split}
        \calJ_t \coloneqq \max_{\Vert P_t \Vert_2 \leq 1} 2\tr(P_t N_t) + \frac{\lambda}{2}\log\det(I - P_t P_t^\top).
    \end{split}
\end{equation}
Let $U_t S_t V_t^\top = N_t$ be the singular value decomposition of $N_t$, where $U_t, V_t$ are orthogonal and $S_t$ is diagonal. We define $D_{t}\coloneqq V_t^\top P_t U_t$. Notice that $\Vert P_t \Vert_2 \leq 1$ iff $\Vert V_t^\top P_t U_t \Vert_2 \leq 1$ and $\det(I - P_t P_t^\top) = \det(I - D_t D_t^\top)$. So, $\calJ_t$ can be equivalently expressed using $D_t$, as
\begin{align}
        \calJ_t &= \max_{\Vert P_t \Vert_2 \leq 1} 2\tr(P_t U_t S_t V_t^\top) + \frac{\lambda}{2}\log\det(I - P_t P_t^\top) \nonumber\\
        &= \max_{\Vert D_{t}\Vert_2 \leq 1} 2\tr(D_{t}S_t) + \frac{\lambda}{2}\log\det(I - D_{t}D_{t}^\top).
        \label{eq:thm.eaw_dT.4}
\end{align}
We write $s_{t,i} \coloneqq (S_t)_{ii} \geq 0$ and denote by $d_{t,i} \coloneqq \sigma_i(D_t)$ the $i$-th singular value of $D_t$, $i = 1,\dots, d$.
Thus, $(1-d_{t,1}^2,\dots,1-d_{t,d}^2)$ are the eigenvalues of $I - D_t D_t^\top$, and we have $\det(I - P_t P_t^\top) = \prod_{i=1}^{d}(1-d_{t,i}^2)$
Moreover, $\Vert D_t \Vert_2 \leq 1$ yields $d_{t,i}\in [0,1]$. 
By von-Neumann's trace inequality\footnote{$\tr(AB) \leq \sum_{i=1}^{d}\sigma_i(A) \sigma_i(B)$ where $\sigma_i(A)$ and $\sigma_i(B)$ are the $i$-th singular values of $A$ and $B$.}, we have
\begin{equation}
\label{eq:thm.eaw_dT.4.5}
\begin{split}
        \calJ_t &= \max_{\Vert D_t \Vert_2 \leq 1} 2\tr(D_t S_t ) + \frac{\lambda}{2}\sum_{i=1}^{d}\log(1-d_{t,i}^2)\\
        &\leq \max_{\Vert D_t \Vert_2 \leq 1} 2\sum_{i=1}^{d}s_{t,i} d_{t,i} + \frac{\lambda}{2}\sum_{i=1}^{d}\log(1-d_{t,i}^2) \leq \sum_{i=1}^{d}\bar{\calJ}_{t,i},
\end{split}
\end{equation}
where $\bar{\calJ}_{t,i} \coloneqq \max_{d_{t,i}\in[0,1]}2s_{t,i}d_{t,i} + \frac{\lambda}{2}\log(1-d_{t,i}^2)$.
Observe that $d^*_{t,i} \coloneqq f_\lambda(s_{t,i})$ maximizes $\bar{\calJ}_{t,i}$.
By defining $D_{\lambda,t} = {\color{black}\diag(d^*_{t,1},\ldots, d^*_{t,d})} = f_\lambda(S_t)$ and taking $D_t = D_{\lambda, t}$, we see that the inequalities in \eqref{eq:thm.eaw_dT.4.5} become equalities. 
Therefore, $D_{\lambda,t}$ is an optimizer of $\calJ_t$ and 
\begin{equation}
\label{eq:thm.eaw_dT.5}
\calJ_t = 2\tr(D_{\lambda,t} S_t) + \frac{\lambda}{2}\log \det(I - D_{\lambda,t}^2).
\end{equation}
When $\lambda > 0$, $D_{\lambda,t}$ is the unique optimizer by strict convexity. 
When $\lambda = 0$, we have that $\calJ_t = 2 \sum_{i=1}^{d}s_{t,i}$. 
Therefore, if $D_t$ is optimal, then $\tr(D_t S_t) = \sum_{i=1}^{d} (D_t)_{ii} s_{t,i} = \sum_{i=1}^{d}s_{t,i}$. Also, notice that $\Vert D_t \Vert_2 \leq 1$ implies $(D_t)_{ii} \in [-1,1]$. This means that, since $s_{t,i} \geq 0$, we get $(D_t)_{ii} = 1$ whenever $s_{t,i} > 0$. 
Therefore, we have shown that, when $\lambda = 0$, 
\[
\text{ $D_t$ is an optimizer of $\calJ_t \Longleftrightarrow D_t$ is a contraction and $(D_t)_{ii} = 1$ if $(S_t)_{ii} > 0$, $i = 1,\dots,d$.}
\]
Combining \eqref{eq:thm.eaw_dT.3}, \eqref{eq:thm.eaw_dT.3.5} and \eqref{eq:thm.eaw_dT.5}, we get
\begin{equation*}
    \begin{split}
        \AW_{2,\lambda}^2(\mu,\nu) &= \tr(A + B) - \sum_{t=1}^{T} 2\tr(D_{\lambda,t} S_t) + \frac{\lambda}{2}\log\det(I - D_{\lambda,t}^2)\\
        &= \tr(A + B) - 2\tr(D_{\lambda}S) - \frac{\lambda}{2}\log\det(I - D_{\lambda}^2),
    \end{split}
\end{equation*}
where $D_\lambda = f_\lambda(S)$, $S = {\color{black}\diag(S_1,\ldots,S_T)}$. 
Since $D_{t}= V_t^\top P_t U_t$, all Gaussian optimizers are of the form $\pi = \pi_P$ given by \eqref{eq:gauss_ansatz},
where $P$ is block diagonal with $P_t = V_t D_t U_t^\top$, and $D_t$ is an optimizer of $\calJ_t$.
\end{proof}

\subsection{Adapted vs classical (regularized) Wasserstein distance}
\label{subsec.Aad_vs_class}
When $T=1$, adapted optimal transport coincides with classical optimal transport. 
So, $\AW_{2,\lambda} = \W_{2,\lambda}$ and the problems share the same optimizers. 
In particular, the unique optimal coupling of $\W_{2,\lambda}$ between two non-degenerate Gaussians is also characterized by \Cref{thm.EAW_dT_sol}. 
To see this, recall that $\W_{2,\lambda}$ has a unique, Gaussian optimal coupling $\pi^*$:
\begin{equation}
\label{eq:ewd.optimizer}
    \pi^* = 
    \calN\bigg(\begin{bmatrix}
    a\\
    b
    \end{bmatrix}, \begin{bmatrix}
        A & C_\lambda\\
        C_\lambda^\top & B
    \end{bmatrix}\bigg), \quad C_\lambda \coloneqq \frac{1}{2}A^{\frac{1}{2}}\Big(4A^{\frac{1}{2}}B A^{\frac{1}{2}} + \frac{\lambda^2}{4}I\Big)^{\frac{1}{2}} A^{-\frac{1}{2}} - \frac{\lambda}{4} I.
\end{equation}
Let $L=H_A V_A^\top$ and $M = H_B U_B^\top$ be the polar decompositions of $L$ and $M$, where $H_A, H_B$ are positive definite and $U_A, U_B$ are orthogonal.
Then $A = LL^\top = H_A^2$ and $B = MM^\top = H_B^2$. Since $A$ and $B$ are both positive definite, they have a unique square root matrix. 
Thus, $H_A = A^\frac{1}{2}$ and $H_B = B^\frac{1}{2}$ and we can rewrite $L = A^\frac{1}{2}V_A^\top$ and $M = B^\frac{1}{2}U_B^\top$. 
For the singular value decomposition $USV^\top$ of $B^\frac{1}{2}A^\frac{1}{2}$, we get $A^\frac{1}{2}B^\frac{1}{2} = VSU^\top$ and $A^{-\frac{1}{2}}B^{-\frac{1}{2}} = V S^{-1} U^\top$. This allows us to express $C_\lambda$ in terms of $L$ and $M$, as
\begin{equation}
\label{eq:Clambda.2}
\begin{split}
    C_\lambda 
    &= \frac{1}{2}L\Big(4L^\top M M^\top L + \frac{\lambda^2}{4}I\Big)^{\frac{1}{2}} L^{-1} - \frac{\lambda}{4} I = L (V_A V) f_\lambda(S) (U_B U)^\top  M^\top.
\end{split}
\end{equation}
Also, notice that $M^\top L = U_B B^\frac{1}{2}A^\frac{1}{2}V_A^\top = U_B U S V^\top V_A^\top = (U_B U) S (V_A V)^\top $, so that $(U_B U) S (V_A V)^\top $ is the singular value decomposition of $M^\top L$. Therefore, by \Cref{thm.EAW_dT_sol}, the optimal coupling for $\AW_{2,\lambda}$ is given by $\pi_P$ defined in \eqref{eq:gauss_ansatz} with $P \coloneqq (V_A V) f_\lambda(S) (U_B U)^\top$. By \eqref{eq:Clambda.2}, $\pi_P$ is exactly the optimizer of $\W_{2,\lambda}$ defined in \eqref{eq:ewd.optimizer}.

When $T \geq 2$, by definition $\W_{2,\lambda} \leq \AW_{2,\lambda}$.
In the case of $d=1$, this yields the following entropic trace inequality:
\begin{equation*}
\begin{split}
    &\hspace{-1cm}\AW_{2,\lambda}(\mu, \nu) - \W_{2,\lambda}(\mu,\nu)\\
    &= 2\tr(S f_\lambda(S)) + \frac{\lambda}{2}\log\det(I - f_\lambda^2(S)) - 2\tr(N f_\lambda(N)) - \frac{\lambda}{2}\log\det(I - f^2_\lambda(N))\\
    &= \tr(g_{\lambda}(S)) - \tr(g_{\lambda}(N)) \geq 0,
\end{split}
\end{equation*}
where $g_{\lambda}(s) = 2sf_\lambda(s) + \frac{\lambda}{2}\log(1-f_\lambda^2(s))$ and $N = M^\top L$. 
In particular, when $\lambda = 0$, we recover $\tr(\calS(N)) \geq \tr(N)$, where $\calS(N)$ denotes the singular value matrix of $N$.
\subsection{On Monge optimizers of adapted Wasserstein distance} 
\label{subsec.Monge}
When $\mu$ and $\nu$ are non-degenerate Gaussians, the Monge map is given by the linear transformation
$x \mapsto b + A^{-\frac{1}{2}}(A^{\frac{1}{2}}B A^{\frac{1}{2}})^{\frac{1}{2}}A^{-\frac{1}{2}}(x-a)$, which induces the unique $\mathcal W_2$-optimal coupling. However, for $\AW_{2}$, optimal Gaussian couplings $\pi_P$ characterized in \Cref{thm.EAW_dT_sol} (ii) may not be Monge couplings, in the sense that they have no associated Monge maps.
Notice that $\pi_P$ is a Monge coupling if and only if there exists $H \in \R^{dT\times dT}$ such that
\[
\begin{bmatrix}
    I \\
    H \\
\end{bmatrix}
LL^\top
\begin{bmatrix}
    I & H^\top\\
\end{bmatrix} = \begin{bmatrix}
    L & 0\\
    0 & M\\
\end{bmatrix}\begin{bmatrix}
    I & P\\
    P^\top & I\\
\end{bmatrix}\begin{bmatrix}
    L^\top & 0\\
    0 & M^\top\\
\end{bmatrix}.
\]
After simple matrix multiplication, we see that the equality holds if and only if $H = MP^\top L^{-1}$  and $PP^\top = I$. Therefore, we conclude that 
\[
\text{$\pi_P$ is a Monge coupling $\Longleftrightarrow$ $P P^\top = I$,}
\]
in which case the associated Monge map is $T_{P}(x) = MP^\top L^{-1}x$ {\color{black} and $ MP^\top L^{-1}$ is block lower
triangular}. Moreover, since $P$ is block diagonal,
\[
PP^\top = I \Longleftrightarrow P_t P_t^\top = I, \, \forall t \leq T \Longleftrightarrow D_t D_t^\top = I, \,\forall t \leq T.
\]
Note that $D_t = I,  \forall t \leq T$ is always a valid choice of optimizer. So there always exists a Monge coupling.
\section{Proof of \Cref{thm.opt_eaot_gau}}
\label{sec.proofthm}
This section is devoted to proving \Cref{thm.opt_eaot_gau}. For this we need some preparatory results.
Recall that for non-degenerate Gaussian marginals, the entropic 2-Wasserstein distance has a unique, Gaussian optimal coupling, given in \eqref{eq:ewd.optimizer}. 
For general degenerate quadratic cost, we show that there still always exists a Gaussian optimizer.
\begin{lemma}
    \label{lem.ewd2}
    Let $\gamma_1 = \calN(m_1,\Sigma_1)$, $\gamma_2 = \calN(m_2,\Sigma_2)$ be non-degenerate Gaussians on $\R^d$, $\lambda \ge 0$ and $\Lambda \in \R^{d \times d}$. 
    Then 
    \begin{equation}
    \label{eq:lem:ewd2.1}
        \mathcal{V}_{\Lambda,\lambda}(\gamma_1,\gamma_2) \coloneqq \inf_{\pi \in \cpl(\gamma_1,\gamma_2)}\int | x - \Lambda y |^2 d\pi + \lambda \DKL(\pi|\mu \otimes \nu)
    \end{equation}
    admits a Gaussian optimizer, whose covariance matrix does not depend on $(m_1,m_2)$.
\end{lemma}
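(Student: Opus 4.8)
The plan is to show that any competitor coupling can be replaced by its Gaussianization without increasing the objective, and then that the infimum over Gaussian couplings is attained. Throughout I treat the reference measure in the entropy as $\gamma_1\otimes\gamma_2$ (non-degenerate since $\gamma_1,\gamma_2$ are). First I would fix $\pi\in\cpl(\gamma_1,\gamma_2)$ with finite objective and let $\pi^G$ be the Gaussian with the same mean and covariance as $\pi$. Because the marginals $\gamma_1,\gamma_2$ are themselves Gaussian, $\pi^G$ again has marginals $\gamma_1,\gamma_2$, so $\pi^G\in\cpl(\gamma_1,\gamma_2)$. Since $|x-\Lambda y|^2$ is a quadratic polynomial, $\int|x-\Lambda y|^2\,d\pi$ depends only on the first two moments of $\pi$ and is hence unchanged by Gaussianization, i.e.\ $\int|x-\Lambda y|^2\,d\pi=\int|x-\Lambda y|^2\,d\pi^G$. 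Note that the possible degeneracy of $\Lambda$ plays no role here, as this argument is insensitive to whether $\Lambda$ is invertible.

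Second, for the entropy term I would invoke the Pythagorean-type identity $\DKL(\pi|\gamma_1\otimes\gamma_2)=\DKL(\pi|\pi^G)+\DKL(\pi^G|\gamma_1\otimes\gamma_2)$, which holds because $\log\big(d\pi^G/d(\gamma_1\otimes\gamma_2)\big)$ is quadratic (both measures being Gaussian), so its $\pi$- and $\pi^G$-integrals agree. Here one uses that finiteness of $\DKL(\pi|\gamma_1\otimes\gamma_2)$ together with non-degeneracy of $\gamma_1\otimes\gamma_2$ forces $\pi$ to have full-rank covariance (otherwise $\pi$ would be supported on a $(\gamma_1\otimes\gamma_2)$-null affine subspace), so $\pi^G$ is non-degenerate and the identity is legitimate. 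As $\DKL(\pi|\pi^G)\ge0$, this gives $\DKL(\pi^G|\gamma_1\otimes\gamma_2)\le\DKL(\pi|\gamma_1\otimes\gamma_2)$; combined with the cost identity and $\lambda\ge0$, the objective at $\pi^G$ is at most that at $\pi$. Hence $\mathcal V_{\Lambda,\lambda}(\gamma_1,\gamma_2)$ equals the infimum over Gaussian couplings alone.

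Third, I would parametrize Gaussian couplings by their cross-covariance $C$, subject to $\begin{bmatrix}\Sigma_1 & C\\ C^\top & \Sigma_2\end{bmatrix}\succeq0$; by the contraction characterization (as in \cite[Proposition~1.3.1]{bhatia2009positive}) this constraint set is closed and bounded, hence compact. On it the cost is affine in $C$, namely $\int|x-\Lambda y|^2\,d\pi_C=|m_1-\Lambda m_2|^2+\tr(\Sigma_1)+\tr(\Lambda\Sigma_2\Lambda^\top)-2\tr(\Lambda C^\top)$, while $C\mapsto\DKL(\pi_C|\gamma_1\otimes\gamma_2)$ is convex and lower semicontinuous, tending to $+\infty$ as the joint covariance degenerates. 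The objective is therefore lower semicontinuous on a compact set and attains its minimum, yielding the desired Gaussian optimizer (with uniqueness when $\lambda>0$ by strict convexity of the entropy). For the moment-independence claim I would note that the means of $\pi_C$ and of $\gamma_1\otimes\gamma_2$ coincide, so $\DKL(\pi_C|\gamma_1\otimes\gamma_2)$ depends only on $\Sigma_1,\Sigma_2,C$; thus the objective splits as the mean-dependent constant $|m_1-\Lambda m_2|^2$ plus a function of $C$ alone, whence the minimizing covariance is independent of $(m_1,m_2)$.

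The step I expect to be the main obstacle is the rigorous justification of the entropy decomposition, and specifically the observation that finite relative entropy against the non-degenerate reference forces the coupling's covariance to be full rank, so that $\pi^G$ is non-degenerate and the quadratic-integral exchange $\int\log(d\pi^G/d(\gamma_1\otimes\gamma_2))\,d\pi=\int\log(d\pi^G/d(\gamma_1\otimes\gamma_2))\,d\pi^G$ is valid; everything else is a routine compactness-and-convexity argument, and the degeneracy of $\Lambda$ causes no difficulty.
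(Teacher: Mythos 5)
Your proposal is correct, and it takes a genuinely different route from the paper. The paper first reduces to $m_1=m_2=0$, then splits into cases: when $\Lambda$ is invertible it changes variables $z=\Lambda y$ to reduce \eqref{eq:lem:ewd2.1} to the standard entropic $\W_{2,\lambda}$ problem between non-degenerate Gaussians, whose unique Gaussian optimizer is known in closed form from the literature; when $\Lambda$ is singular it perturbs to $\Lambda+\epsilon I$, and recovers a Gaussian optimizer in the limit using stability of entropic optimal transport together with $\W_2$-closedness of the Gaussian family. Your argument instead Gaussianizes an arbitrary competitor: the quadratic cost is moment-determined, and the Pythagorean identity $\DKL(\pi|\gamma_1\otimes\gamma_2)=\DKL(\pi|\pi^G)+\DKL(\pi^G|\gamma_1\otimes\gamma_2)$ (valid since $\log(d\pi^G/d(\gamma_1\otimes\gamma_2))$ is quadratic and $\pi,\pi^G$ share first and second moments, with finiteness of the entropy forcing $\pi^G$ to be non-degenerate) shows the Gaussian surrogate never increases the objective; existence then follows from compactness of the cross-covariance parametrization and lower semicontinuity. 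This is self-contained and uniform in $\Lambda$ --- no case split, no appeal to a stability theorem or to the closed-form entropic Gaussian solution --- and it yields slightly more, namely that every finite-objective coupling is dominated by its Gaussianization (hence, for $\lambda>0$, that the optimizer must be Gaussian). What the paper's route buys is explicitness: in the invertible case the optimizer is identified directly via \eqref{eq:ewd.optimizer}. Two small points to make explicit in a full write-up: for $\lambda=0$ the entropy step is vacuous and the reduction to Gaussian couplings follows from the cost identity alone; and the exchange of quadratic integrals uses that $\pi$ has finite second moments, which is automatic because its marginals are Gaussian.
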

\begin{proof}
    Let $F\colon \R^{d\times d} \to \R^{d\times d}$ s.t. $F(x,y) = (x - m_1, y-m_2)$, $x,y \in \R^d$. Observe that, for all $\pi \in \cpl(\gamma_1, \gamma_2)$, we have $\DKL(\pi|\mu \otimes \nu) = \DKL(F_{\#}\pi|F_{\#}(\mu \otimes \nu))$ and 
    $$\int | x - \Lambda y |^2 d\pi = |m_1 - \Lambda m_2|^2 + \int | x - \Lambda y |^2 d(F_{\#}\pi).
    $$ 
    Since $F$ is invertible, we have $\mathcal{V}_{\Lambda,\lambda}(\gamma_1,\gamma_2) = |m_1 - \Lambda m_2|^2 + \mathcal{V}_{\Lambda,\lambda}(\calN(0,\Sigma_1),\calN(0,\Sigma_2))$. If $\pi^*$ is a Gaussian optimizer of $\mathcal{V}_{\Lambda,\lambda}(\calN(0,\Sigma_1),\calN(0,\Sigma_2))$, the covariance matrix of $\pi^*$ does not depend on $(m_1,m_2)$. Then $F^{-1}_\# \pi^*$ is a Gaussian optimizer of $\mathcal{V}_{\Lambda,\lambda}(\gamma_1,\gamma_2)$ and $F^{-1}_\# \pi^*$ has the same covariance matrix as $\pi^*$. Thus, w.l.g. we assume $m_1 = m_2 = 0$ and prove that $\mathcal{V}_{\Lambda,\lambda}(\gamma_1,\gamma_2)$ has a Gaussian optimizer.

    \medskip \noindent \emph{Case 1} ($\Lambda$ invertible):
    If $\Lambda$ is invertible, we set $f_{\Lambda}(x) = \Lambda x$, $g_{\Lambda}(x,y) = (x,\Lambda^{-1}y)$, $\tilde\gamma_2 = (f_\Lambda)_{\#}\gamma_2$, and find
    \begin{align}
        \mathcal V_{\Lambda,\lambda}(\gamma_1,\gamma_2)
        &= \inf_{\tilde{\pi} \in \cpl(\gamma_1,\tilde \gamma_2)}\int | x - z |^2 d\tilde{\pi} + \lambda \DKL\big((g_\Lambda)_{\#}\tilde{\pi}| (g_\Lambda)_{\#} (\gamma_1\otimes \tilde\gamma_2)\big)\nonumber\\
        &= \inf_{\tilde{\pi} \in \cpl(\gamma_1,\tilde\gamma_2)}\int | x - z |^2 d\tilde{\pi} + \lambda \DKL(\tilde{\pi}| \gamma_1\otimes \tilde\gamma_2),
        \label{eq:lem:ewd2.2}
    \end{align} 
    where the first equality holds by definition of the push-forward measure, and the second equality is due to the relative entropy being invariant under the push-forward by bijections.
    Since being Gaussian is preserved under linear transformations, we have that $\gamma_1, \tilde\gamma_2$ are both Gaussian, and thus the optimizer of \eqref{eq:lem:ewd2.2}, denoted by $\tilde{\pi}^{*}$, is also Gaussian and given by \eqref{eq:ewd.optimizer}. 
    Hence, $\pi^{*} \coloneqq (g_\Lambda)_{\#}\tilde{\pi}^{*}$ is a Gaussian optimizer of \eqref{eq:lem:ewd2.1}.

    \medskip \noindent \emph{Case 2} ($\Lambda$ not invertible):
    If $\Lambda$ is not invertible, let $\epsilon > 0$ be such that $\Lambda_\epsilon = \Lambda + \epsilon I$ is invertible.
    As in Case 1, we write $f_{\Lambda_\epsilon}(x) = \Lambda_\epsilon x$ and consider $\gamma_2^\epsilon = (f_{\Lambda_\epsilon})_{\#}\nu$.
    By Case 1, we have that \eqref{eq:lem:ewd2.1} admits a Gaussian optimizer between $\gamma_1$ and $\gamma_2^\epsilon$, which we denote by $\pi_\epsilon^{*}$.
    Let $\epsilon_n \searrow0$ be such that $\Lambda_{\epsilon_n}$ is invertible.
    Clearly, $\gamma_2^{\epsilon_n}$ converges to $\gamma_2$ in $\mathcal W_2$.
    By stability of entropic optimal transport, see e.g. \cite[Theorem 3.6]{eckstein2024computational}, the accumulation points of $(\pi_{\epsilon_n}^{*})_{n \in \N}$ are optimizers of \eqref{eq:lem:ewd2.1}. 
    Since the Gaussian distributions are a $\mathcal W_2$-closed subspace of $\mathcal P_2(\R^d)$, these accumulation points are again Gaussian, which proves that \eqref{eq:lem:ewd2.1} admits a Gaussian minimizer.    
\end{proof}

Next, we recall the dynamic programming principle for $\AW_{2,\lambda}(\mu,\nu)$ and the conditional law of Gaussian distributions in terms of their Cholesky decompositions. 

\begin{proposition}[Dynamic programming principle]
\label{prop.dpp}
Let $\mu,\nu\in\calP(\R^{dT})$. Set $V_T^{\mu,\nu}(x_{\bt{T}},y_{\bt{T}})=0$ and define, for all $t = 0,\ldots, T-1$,
\begin{align}
\label{eq:prop.dpp.1}
\begin{split}
    V_t^{\mu,\nu}(x_{\bt{t}},y_{\bt{t}}) = \inf_{\pi_{x_\bt{t},y_\bt{t}}^{t+1}\in \bccpl(\mu_{x_{\bt{t}}}^{t+1},\nu_{y_{\bt{t}}}^{t+1})}\int \Big[| x_{t+1} - y_{t+1}|^2 &+ \lambda \log\Big(\frac{d\pi_{x_\bt{t},y_\bt{t}}^{t+1}}{d(\mu\otimes\nu)_{x_\bt{t},y_\bt{t}}^{t+1}}\Big)\\
    &+ V^{\mu,\nu}_{t+1}(x_{\bt{t+1}},y_{\bt{t+1}})\Big]d\pi_{x_\bt{t},y_\bt{t}}^{t+1}.
\end{split}
\end{align}
Then $V_t^{\mu,\nu}(x_{\bt{t}},y_{\bt{t}}) = \AW^2_{2,\lambda}(\mu_{x_{\bt{t}}},\nu_{y_{\bt{t}}})$ for all $t = 0,\dots,T-1$; in particular $V_0^{\mu,\nu} = \AW^2_{2,\lambda}(\mu,\nu)$.
Moreover, let $\pi_{x_\bt{t},y_\bt{t}}^{t+1}$ be optimizers of $V_t^{\mu,\nu}(x_{\bt{t}},y_{\bt{t}})$, $t=0,\ldots,T-1$. Then $\pi_{x_\bt{t},y_\bt{t}}(dx_{\btp{t}}, dy_{\btp{t}}) \coloneqq \prod_{s=0}^{t}\pi_{x_\bt{s},y_\bt{s}}^{s+1}(dx_{s+1}, dy_{s+1})$ is an optimizer of $\AW_{2,\lambda}(\mu_{x_{\bt{t}}},\nu_{y_{\bt{t}}})$, $t= 0,\dots,T-1$; in particular, $\pi(dx,dy) \coloneqq \prod_{t=0}^{T-1}\pi_{x_\bt{t},y_\bt{t}}^{t+1}(dx_{t+1},dy_{t+1})$ is an optimizer of $\AW_{2,\lambda}(\mu,\nu)$.
\end{proposition}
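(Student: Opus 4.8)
The plan is to prove the value-function identity $V_t^{\mu,\nu}(x_{\bt{t}},y_{\bt{t}})=\AW_{2,\lambda}^2(\mu_{x_{\bt{t}}},\nu_{y_{\bt{t}}})$ by backward induction on $t$, with the trivial base case $t=T$ (where the conditional future laws are point masses and both sides vanish). The two ingredients driving the induction are a one-step factorization of bi-causal couplings and the additive decomposition of the entropic transport functional over time.

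First I would record the factorization. By \Cref{def:biccpl}, a bi-causal coupling $\pi\in\bccpl(\mu_{x_{\bt{t}}},\nu_{y_{\bt{t}}})$ disintegrates, along the first future coordinate $(x_{t+1},y_{t+1})$, into a one-step coupling $\pi^{t+1}_{x_{\bt{t}},y_{\bt{t}}}\in\cpl(\mu^{t+1}_{x_{\bt{t}}},\nu^{t+1}_{y_{\bt{t}}})$ together with, for each value of $(x_{t+1},y_{t+1})$, a bi-causal coupling of the further conditional futures $\mu_{x_{\bt{t+1}}}$ and $\nu_{y_{\bt{t+1}}}$; conversely, any measurable family of such pieces reassembles into a bi-causal coupling. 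Iterating this yields the product form $\pi_{x_{\bt{t}},y_{\bt{t}}}=\prod_{s=t}^{T-1}\pi^{s+1}_{x_{\bt{s}},y_{\bt{s}}}$ used in the statement.

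Next I would decompose the objective. The quadratic cost is additive, $|x_{\btp{t}}-y_{\btp{t}}|^2=\sum_{s=t}^{T-1}|x_{s+1}-y_{s+1}|^2$, and since the reference measure $\mu_{x_{\bt{t}}}\otimes\nu_{y_{\bt{t}}}$ itself factorizes across time (the marginals $\mu,\nu$ disintegrate separately), the chain rule for relative entropy gives $\DKL(\pi_{x_{\bt{t}},y_{\bt{t}}}\,|\,\mu_{x_{\bt{t}}}\otimes\nu_{y_{\bt{t}}})=\sum_{s=t}^{T-1}\int \DKL(\pi^{s+1}_{x_{\bt{s}},y_{\bt{s}}}\,|\,\mu^{s+1}_{x_{\bt{s}}}\otimes\nu^{s+1}_{y_{\bt{s}}})\,d\pi_{x_{\bt{t}},y_{\bt{t}}}$. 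Hence the full cost is the $\pi$-integral of a sum of one-step contributions, each depending only on the corresponding history $(x_{\bt{s}},y_{\bt{s}})$ and its one-step kernel. Substituting the induction hypothesis $V_{t+1}^{\mu,\nu}=\AW_{2,\lambda}^2(\mu_{x_{\bt{t+1}}},\nu_{y_{\bt{t+1}}})$ for the tail and carrying out the tail minimization first for each fixed $(x_{t+1},y_{t+1})$ collapses the problem to the single one-step minimization defining $V_t^{\mu,\nu}$ in \eqref{eq:prop.dpp.1}; the case $t=0$ recovers $V_0^{\mu,\nu}=\AW_{2,\lambda}^2(\mu,\nu)$. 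Reading these equalities in reverse yields the optimality assertion: gluing the pointwise one-step optimizers $\pi^{s+1}_{x_{\bt{s}},y_{\bt{s}}}$ recovers an optimizer of each $\AW_{2,\lambda}^2(\mu_{x_{\bt{t}}},\nu_{y_{\bt{t}}})$, and in particular of $\AW_{2,\lambda}^2(\mu,\nu)$.

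The main obstacle is the rigorous interchange of the infimum with the outer integration — showing that minimizing the tail pointwise and then the one-step kernel attains the same value as minimizing over the full bi-causal coupling — together with the measurable-selection step that makes the concatenation $\prod_{s}\pi^{s+1}_{x_{\bt{s}},y_{\bt{s}}}$ a bona fide coupling rather than a mere family of kernels. I would treat the interchange by the standard sandwich argument: one inequality by restricting any admissible $\pi$ to its one-step pieces and bounding each tail cost below by $V_{t+1}$, the reverse by inserting near-optimal pointwise kernels, relying on nonnegativity of the transport cost and lower semicontinuity of the entropic functional. For the selection I would invoke a measurable selection theorem for the parametrized family of conditional entropic problems (the maps $(x_{\bt{s}},y_{\bt{s}})\mapsto$ optimal one-step kernel), ensuring the glued product is a well-defined bi-causal coupling attaining the value.
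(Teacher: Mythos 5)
Your proposal is correct and follows essentially the same route as the paper, which proves this proposition by appealing to the separability of the quadratic cost and of the log-likelihood (chain rule for relative entropy along the time filtration) and defers the details to the cited references \cite{eckstein2024computational,pichler2022nested}; your sketch fleshes out exactly that argument, including the two genuine technical points (interchange of infimum and outer integration, and measurable selection of the one-step optimal kernels) that those references handle.
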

\begin{proof}
    It follows from the separability of quadratic cost and log-likelihood; see \cite{eckstein2024computational,pichler2022nested}.
\end{proof}

\begin{lemma}
\label{lem.cond_law_gau}
Let $\mu = \mathcal N(a,A)$ be a non-degenerate Gaussian on $\R^{dT}$,  whose covariance matrix has Cholesky decomposition $A = LL^\top$.
Then, for all $t=1,\ldots,T-1$ and $x_{\bt{t}}\in\R^{dt}$,
\begin{equation}
\label{eq:lem.cond_law_gau.1}
        \mu_{x_{\bt{t}}} = \calN(a_{\mathbf{t'}} + L_{\btp{t},\bt{t}}L^{-1}_{\bt{t},\bt{t}}(x_{\bt{t}}-a_{\bt{t}}), L_{\btp{t},\btp{t}}L_{\btp{t},\btp{t}}^\top).
\end{equation}
\end{lemma}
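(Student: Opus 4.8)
The plan is to exploit the representation of $\mu$ through its Cholesky factor directly, rather than invoking the general conditional-Gaussian formula. I would write a sample $X \sim \mu$ as $X = a + LZ$ with $Z \sim \calN(0, I_{dT})$. Since the Cholesky factor $L$ is (block) lower triangular, its decomposition with respect to the split into past coordinates $x_{\bt{t}}$ and future coordinates $x_{\btp{t}}$ has vanishing upper-right block, i.e.
\[
L = \begin{bmatrix} L_{\bt{t},\bt{t}} & 0 \\ L_{\btp{t},\bt{t}} & L_{\btp{t},\btp{t}} \end{bmatrix}.
\]
Consequently $X_{\bt{t}} = a_{\bt{t}} + L_{\bt{t},\bt{t}} Z_{\bt{t}}$ and $X_{\btp{t}} = a_{\btp{t}} + L_{\btp{t},\bt{t}} Z_{\bt{t}} + L_{\btp{t},\btp{t}} Z_{\btp{t}}$, where $Z_{\bt{t}}$ and $Z_{\btp{t}}$ are the past and future blocks of $Z$.

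The two structural facts I would record are: (i) $L_{\bt{t},\bt{t}}$, being the leading principal block of the invertible lower-triangular matrix $L$, is itself invertible; and (ii) $Z_{\btp{t}}$ is independent of $Z_{\bt{t}}$, since the coordinates of a standard Gaussian are independent. Fact (i) lets me invert the first equation to obtain $Z_{\bt{t}} = L_{\bt{t},\bt{t}}^{-1}(X_{\bt{t}} - a_{\bt{t}})$, so that conditioning on $X_{\bt{t}} = x_{\bt{t}}$ is equivalent to conditioning on $Z_{\bt{t}} = L_{\bt{t},\bt{t}}^{-1}(x_{\bt{t}} - a_{\bt{t}})$. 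Substituting this value into the expression for $X_{\btp{t}}$ and using fact (ii), the conditional law of $X_{\btp{t}}$ given $X_{\bt{t}} = x_{\bt{t}}$ is the law of the affine image $a_{\btp{t}} + L_{\btp{t},\bt{t}} L_{\bt{t},\bt{t}}^{-1}(x_{\bt{t}} - a_{\bt{t}}) + L_{\btp{t},\btp{t}} Z_{\btp{t}}$ of the unconditioned standard Gaussian $Z_{\btp{t}}$. This is precisely $\calN\big(a_{\btp{t}} + L_{\btp{t},\bt{t}} L_{\bt{t},\bt{t}}^{-1}(x_{\bt{t}} - a_{\bt{t}}),\, L_{\btp{t},\btp{t}} L_{\btp{t},\btp{t}}^\top\big)$, which is the claimed formula \eqref{eq:lem.cond_law_gau.1}.

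I do not expect any genuine obstacle here: the entire content is the bookkeeping of the block-triangular structure of $L$ together with the independence of the Gaussian coordinates. The only point demanding a line of justification is the invertibility of $L_{\bt{t},\bt{t}}$ and the legitimacy of passing from conditioning on $X_{\bt{t}}$ to conditioning on $Z_{\bt{t}}$, both of which follow because $z_{\bt{t}} \mapsto a_{\bt{t}} + L_{\bt{t},\bt{t}} z_{\bt{t}}$ is an affine bijection. If one prefers to avoid the representation-based argument, the same result falls out of the standard Schur-complement formula for conditional Gaussians after substituting $A_{\bt{t},\bt{t}} = L_{\bt{t},\bt{t}}L_{\bt{t},\bt{t}}^\top$, $A_{\btp{t},\bt{t}} = L_{\btp{t},\bt{t}}L_{\bt{t},\bt{t}}^\top$, and $A_{\btp{t},\btp{t}} = L_{\btp{t},\bt{t}}L_{\btp{t},\bt{t}}^\top + L_{\btp{t},\btp{t}}L_{\btp{t},\btp{t}}^\top$: the factor $L_{\bt{t},\bt{t}}^\top$ in the cross term cancels against $A_{\bt{t},\bt{t}}^{-1}$ to give the stated mean, while the Schur complement collapses to $L_{\btp{t},\btp{t}}L_{\btp{t},\btp{t}}^\top$.
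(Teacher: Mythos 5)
Your argument is correct and complete. The paper itself gives no proof of this lemma---it simply cites \cite[Lemma~4.3]{gunasingam2024adapted}---so your write-up supplies a self-contained justification where the paper delegates to a reference. The representation $X = a + LZ$ with $Z \sim \calN(0,I_{dT})$, the invertibility of the leading block $L_{\bt{t},\bt{t}}$ (its diagonal entries are among those of the invertible triangular matrix $L$), the equivalence of conditioning on $X_{\bt{t}}$ and on $Z_{\bt{t}} = L_{\bt{t},\bt{t}}^{-1}(X_{\bt{t}}-a_{\bt{t}})$ via the affine bijection, and the independence of $Z_{\btp{t}}$ from $Z_{\bt{t}}$ are exactly the ingredients needed; the Schur-complement cross-check you sketch also comes out right, since $A_{\btp{t},\bt{t}}A_{\bt{t},\bt{t}}^{-1}A_{\btp{t},\bt{t}}^{\top} = L_{\btp{t},\bt{t}}L_{\btp{t},\bt{t}}^{\top}$ cancels the corresponding term in $A_{\btp{t},\btp{t}}$. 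No gaps.
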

\begin{proof}
    See \cite[Lemma 4.3]{gunasingam2024adapted}.
\end{proof}

\begin{remark}
The variance of $\mu_{x_{\bt{t}}}$ only depends on $t$ and does not depend on $x_{\bt{t}}$.
\end{remark}

Finally, we prove \Cref{thm.opt_eaot_gau} by back-propagating the Gaussianity of optimal couplings through the dynamic programming principle. 

\begin{proof}[Proof of \Cref{thm.opt_eaot_gau}]
W.l.g.\ we assume $a = b = 0$ and proceed to prove the assertion backwards in time.

\medskip \noindent \emph{Step 1:} For $t=T-1$, by Proposition~\ref{prop.dpp}, we have
\begin{equation}
\label{eq:thm.opt_eaot_gau.1}
    V_{T-1}^{\mu,\nu}(x_{\bt{T-1}},y_{\bt{T-1}}) = \AW^2_{2,\lambda}(\mu_{x_{\bt{T-1}}},\nu_{y_{\bt{T-1}}}) = \W^2_{2,\lambda}(\mu_{x_{\bt{T-1}}},\nu_{y_{\bt{T-1}}}).
\end{equation}
Notice that, for every $(x_{\bt{T-1}},y_{\bt{T-1}}) \in \R^{d(T-1)} \times \R^{d(T-1)}$, $\mu_{x_{\bt{T-1}}}$ and $\nu_{y_{\bt{T-1}}}$ are both non-degenerate Gaussian by Lemma~\ref{lem.cond_law_gau} and the covariance matrices are independent of $(x_{\bt{T-1}},y_{\bt{T-1}})$. Thus, by Lemma~\ref{lem.ewd2}, $\AW_{2,\lambda}(\mu_{x_{\bt{T-1}}},\nu_{y_{\bt{T-1}}})$ has a Gaussian optimal coupling, which we denote by $\pi_{x_\bt{T-1},y_\bt{T-1}}$, whose covariance matrix is independent of $(x_\bt{T-1},y_\bt{T-1})$.

\medskip  \noindent\emph{Step 2: }Suppose that, for some $t \in \{1,\dots,T-1\}$ and every $(x_{\bt{t}},y_{\bt{t}}) \in \R^{dt} \times \R^{dt}$, $\AW_{2,\lambda}(\mu_{x_{\bt{t}}},\nu_{y_{\bt{t}}})$ has a Gaussian optimal coupling $\pi_{x_{\bt{t}},y_{\bt{t}}}$, whose covariance matrix is independent of $(x_{\bt{t}},y_{\bt{t}})$. 
This means that there exists a matrix $C_t 
\in \R^{d(T-t)\times d(T-t)}$ s.t.\
\begin{equation*}
    \pi_{x_{\bt{t}},y_{\bt{t}}} = \calN\Big(
    \begin{bmatrix}
        L_{\btp{t},\bt{t}}L^{-1}_{\bt{t},\bt{t}}x_{\bt{t}} \\
        M_{\btp{t},\bt{t}}M^{-1}_{\bt{t},\bt{t}}y_{\bt{t}}
    \end{bmatrix},
    \begin{bmatrix}
        L_{\btp{t},\btp{t}}L_{\btp{t},\btp{t}}^\top & C_t \\
        C_t^\top & M_{\btp{t},\btp{t}}M_{\btp{t},\btp{t}}^\top
    \end{bmatrix}\Big)
    \eqqcolon
    \calN\Big(
    \begin{bmatrix}
        \hat{a}_t \\
        \hat{b}_t
    \end{bmatrix},
    \begin{bmatrix}
        \hat{A}_t  & C_t \\
        C_t^\top & \hat{B}_t 
    \end{bmatrix}\Big).
\end{equation*}
By Proposition~\ref{prop.dpp} and plugging $\pi_{x_{\bt{t}},y_{\bt{t}}}$ into the cost functional in \eqref{eq:AW.reg}, we get
\begin{equation}
    \label{eq:thm.opt_eaot_gau.2}
    \begin{split}
     V_t^{\mu,\nu}(x_{\bt{t}},y_{\bt{t}}) = \AW^2_{2,\lambda}(\mu_{x_{\bt{t}}},\nu_{y_{\bt{t}}}) &= |\hat{a}_t-\hat{b}_t|^2 + \AW^2_{2,\lambda}(\calN(0,\hat{A}_t),\calN(0,\hat{B}_t)),
    \end{split}
\end{equation}
where $\AW^2_{2,\lambda}(\calN(0,\hat{A}_t),\calN(0,\hat{B}_t)) \eqqcolon R_t$ does not depend on $(x_{\bt{t}}, y_{\bt{t}})$.
Now, we plug \eqref{eq:thm.opt_eaot_gau.2} into \eqref{eq:prop.dpp.1} in Proposition~\ref{prop.dpp}, with $t=t-1$, and get
\begin{align*}
   V_{t-1}^{\mu,\nu}(x_{\bt{t-1}},y_{\bt{t-1}}) = \inf_{\pi^t_{x_{\bt{t-1}},y_{\bt{t-1}}}\in \cpl(\mu^t_{x_{\bt{t-1}}},\nu^t_{y_{\bt{t-1}}})}&\int \Big[| x_{t} - y_{t}|^2 + | L_{\btp{t},\bt{t}}L^{-1}_{\bt{t},\bt{t}}x_{\bt{t}} - M_{\btp{t},\bt{t}}M^{-1}_{\bt{t},\bt{t}}y_{\bt{t}} |^2\\ &+ \lambda \log(\frac{d\pi_{x_\bt{t-1},y_\bt{t-1}}^{t}}{d(\mu\otimes\nu)_{x_\bt{t-1},y_\bt{t-1}}^{t}}) + R_t\Big]d\pi_{x_{\bt{t-1}},y_{\bt{t-1}}}^t,
\end{align*}
which after expanding the quadratic terms and recombining them, is equivalent (up to a constant) to 
\begin{equation}
\label{eq:thm.opt_eaot_gau.3}
\begin{split}
    \inf_{\pi_{x_{\bt{t-1}},y_{\bt{t-1}}}^t\in \cpl(\mu^t_{x_{\bt{t-1}}},\nu^t_{y_{\bt{t-1}}})}\int \Big[| x_{\bt{t}} - \Lambda_t y_{\bt{t}}|^2 + \lambda \log(\frac{d\pi_{x_\bt{t-1},y_\bt{t-1}}^t}{d(\mu\otimes\nu)_{x_\bt{t-1},y_\bt{t-1}}^t})\Big]d\pi_{x_{\bt{t-1}},y_{\bt{t-1}}}^t,
\end{split}
\end{equation}
where $\Lambda_t = \bI + L^{-\top}_{\bt{t},\bt{t}} L^{\top}_{\btp{t},\bt{t}}M_{\btp{t},\bt{t}}M^{-1}_{\bt{t},\bt{t}}$. Notice that, by Lemma~\ref{lem.cond_law_gau}, $\mu_{x_{\bt{t-1}}}^t$ and $\nu_{y_{\bt{t-1}}}^t$ are Gaussian with covariance matrices independent of $(x_{\bt{t-1}}, y_{\bt{t-1}})$. Thus, by Lemma~\ref{lem.ewd2}, \eqref{eq:thm.opt_eaot_gau.3} has a Gaussian optimal coupling $\pi^{t}_{x_{\bt{t-1}},y_{\bt{t-1}}}$, whose covariance matrix is also independent of $(x_{\bt{t-1}}, y_{\bt{t-1}})$.
Therefore, by Proposition~\ref{prop.dpp}, 
$\pi_{x_{\bt{t-1}},y_{\bt{t-1}}}(dx_{t:T},dy_{t:T}) \coloneqq \pi^{t}_{x_{\bt{t-1}},y_{\bt{t-1}}}(dx_{t},dy_{t})\pi_{x_{\bt{t}},y_{\bt{t}}}(dx_{\btp{t}},dy_{\btp{t}})$ 
is a Gaussian optimal coupling of $\AW_{2,\lambda}(\mu_{x_{\bt{t-1}}},\nu_{y_{\bt{t-1}}})$ and the covariance matrix of $\pi_{x_{\bt{t-1}},y_{\bt{t-1}}}$ is independent of $(x_\bt{t-1},y_\bt{t-1})$.

\medskip \noindent \emph{Step 3:} By induction, for all $t \in \{0,\dots, T-1\}$, $V_t^{\mu,\nu}(x_{\bt{t}},y_{\bt{t}})$ has a Gaussian optimal coupling; in particular, there is a Gaussian optimal coupling for $V_0^{\mu,\nu} = \AW^2_{2,\lambda}(\mu,\nu)$, which yields the assertion.
\end{proof}

\section{Examples}
\label{sec:example}
In this section we compare optimal couplings for different optimal transport distances, in particular for $\AW_{2}$ and $\AW_{2,\lambda}$, we consider the McCann displacement interpolation (\cite{mccann1997convexity}): $(\mu_t)_{t\in[0,1]}$ s.t. $\mu_t = ((1-t)X +tY)_{\#}\sP, t \in [0,1]$, where $(X,Y)\sim \pi^*$ and $\pi^*$ is the optimal coupling. We showcase that $\AW_{2}$-optimal couplings may be non-unique, non-Gaussian, and that, even if the coupling is Gaussian, the $\AW_{2}$ displacement interpolation may degenerate. On the other hand, the $\AW_{2,\lambda}$-optimal coupling is always unique, Gaussian, and the $\AW_{2,\lambda}$ displacement interpolation remains non-degenerate, as long as $\lambda > 0$. 

\begin{example}[Non-unique optimal coupling]
\label{ex:2}
Let $d=1$, $T=2$, \( \mu = \mathcal{N}(a, A) \) and \( \nu = \mathcal{N}(b, B) \), with
\[
    a = \begin{bmatrix}
    0 \\
    0
    \end{bmatrix},\quad\!\!
    b = \begin{bmatrix}
    6 \\
    -6
    \end{bmatrix},\quad\!\!
    A = \begin{bmatrix}
    1 & 2 \\
    2 & 5
    \end{bmatrix},\quad\!\!
    B = \begin{bmatrix}
    1 & -0.5 \\
    -0.5 & 1.25
    \end{bmatrix}, \quad\!\!
    L = \begin{bmatrix}
    1 & 0 \\
    2 & 1
    \end{bmatrix},\quad\!\!
    M = \begin{bmatrix}
    1 & 0 \\
    -0.5 & 1
    \end{bmatrix},
\]
so that
$A = LL^\top$ and $B = MM^\top$. 
Observe that $L^\top M = \begin{bmatrix}
    0 & 2 \\
    -0.5 & 1
    \end{bmatrix}$
and $(L^\top M)_{1,1} = 0$. 
Then, the $\AW_{2}$-optimal coupling $\pi^*
_{\AW_{2}}$ is not unique, since there is one degree of freedom to choose optimal Gaussian couplings from.
Indeed,
\begin{equation*}
    \pi^*_{\AW_{2}} =
    \mathcal N\Big(
    \begin{bmatrix}
        a \\
        b
    \end{bmatrix},
    \begin{bmatrix}
        LL^\top & LP^*_\lambda M^\top \\
        (LP^*_\lambda M^\top)^\top & MM^\top
    \end{bmatrix}\Big), \quad \text{with\; }
    P^*_\lambda = \begin{bmatrix}
        \rho & 0 \\
        0 & 1
    \end{bmatrix},
\end{equation*}
is an optimal coupling as long as $\rho \in [-1,1]$.
In particular, choosing $\rho = 1, 0, -1$ corresponds to the monotone, independent, and anti-monotone coupling of the first marginals, respectively. These different optimal couplings can be observed in Figure~\ref{fig:cpl_rho}. 
\begin{figure}[H]
\centering
\includegraphics[width=\textwidth]{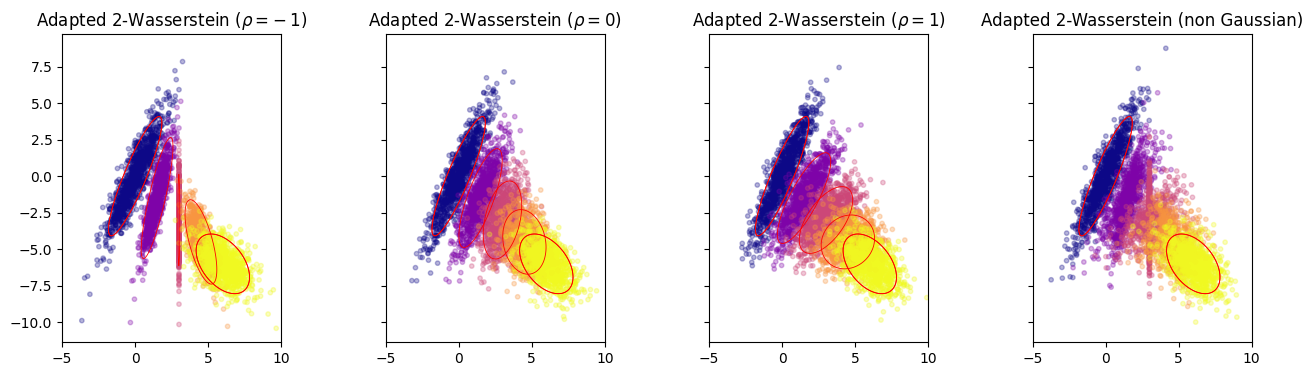}
\caption{Different optimal couplings of $\AW_{2}$}
\label{fig:cpl_rho}
\end{figure}
In contrast to $\AW_{2}$-optimal couplings, the $\AW_{2,\lambda}$-optimal coupling $\pi^*
_{\AW_{2,\lambda}}$ is unique if $\lambda > 0$. Moreover, as $\lambda \to 0$, this converges to  $\pi^*_{\AW_{2}}$, corresponding to $\rho = 0$, which is the coupling with maximal entropy among all optimal couplings for $\AW_2$. 
Therefore, $\pi^*
_{\AW_{2,\lambda}}$ can be seen as a robust alternative to $\pi^*
_{\AW_{2}}$ when the optimizers are not unique. 

Finally, we show that there may be non-Gaussian optimizers. Intuitively, this can be constructed by considering a bi-causal coupling as a mixture of optimal couplings $\pi^*
_{\AW_{2}}$ corresponding to $\rho = -1$ and $\rho = 0$. Let $Z_1, Z_2$ be standard Gaussian random variables on $\R$, and $Z_W$ a coin flip, i.e., a Bernoulli random variable taking values $1$ and $-1$ with equal probability. Let $Z_1, Z_2, Z_W$ be all independent and define 
\[
Z^X = \begin{bmatrix}
    Z_1\\
    Z_2
\end{bmatrix},\quad 
Z^Y = 
\begin{bmatrix}
    W Z_1\\
    Z_2
\end{bmatrix},\quad
\begin{bmatrix}
    X \\
    Y
\end{bmatrix} = \begin{bmatrix}
    a \\
    b
\end{bmatrix} + 
\begin{bmatrix}
    L & 0 \\
    0 & M
\end{bmatrix}\begin{bmatrix}
    Z^X \\
    Z^Y
\end{bmatrix}.
\]
Then the distribution of $(X,Y)$ is also a $\AW_{2}$ optimal coupling, but clearly not a Gaussian one, since $Z_1 + WZ_1$ is a mixture of a Gaussian and a Dirac measure; see Figure~\ref{fig:cpl_rho}.
\end{example}

\begin{example}[Degenerate displacement interpolation]
\label{ex:1}Let $d=1$, $T=2$, and consider the parameters
\[
a = \begin{bmatrix}
0 \\
0
\end{bmatrix},\quad
b = \begin{bmatrix}
6 \\
-6
\end{bmatrix},\quad
A = \begin{bmatrix}
1 & 2 \\
2 & 5
\end{bmatrix},\quad
B = \begin{bmatrix}
1 & -1 \\
-1 & 2
\end{bmatrix},\quad 
L = \begin{bmatrix}
1 & 0 \\
2 & 1
\end{bmatrix},\quad 
M = \begin{bmatrix}
1 & 0 \\
-1 & 1
\end{bmatrix},
\] 
so that
 \( A = LL^\top \) and \( B = MM^\top \).
For \( \mu = \mathcal{N}(a, A) \) and \( \nu = \mathcal{N}(b, B) \), we compute the optimal couplings under different distances, which are $\W_2$ (with optimal coupling $\pi^*_{\W_2}$),  $\W_{2,\lambda}$ (with optimal coupling $\pi^*_{\W_{2,\lambda}}$) for $\lambda = 1$, $\AW_{2}$ (with optimal coupling $\pi^*_{\AW_2}$), and $\AW_{2,\lambda}$ (with optimal coupling $\pi^*_{\AW_{2,\lambda}}$) for $\lambda = 1$, which are all unique in this example; see \Cref{fig:cpl}.
\begin{figure}[H]
\centering
\includegraphics[width=\textwidth]
{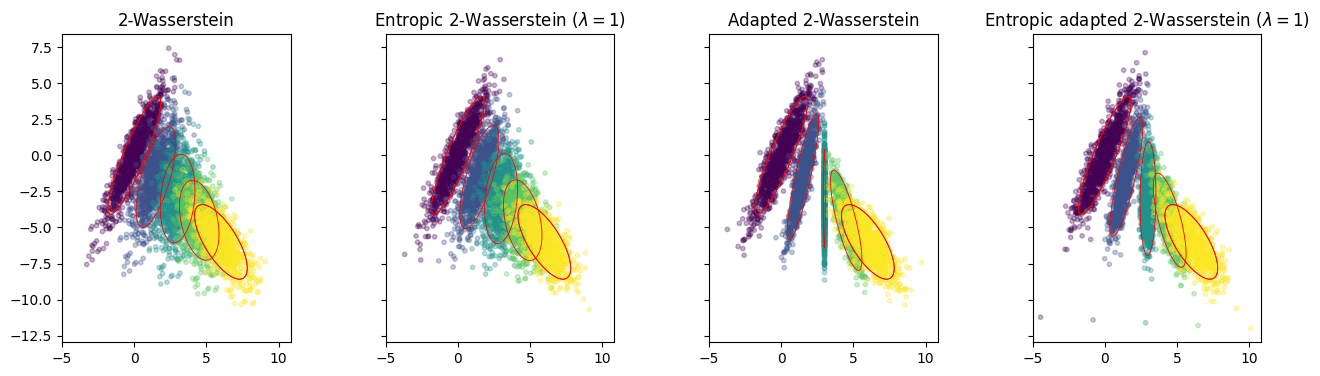}
\caption{Optimal couplings for different distances}
\label{fig:cpl}
\end{figure}
We see from \Cref{fig:cpl} that the $\AW_{2}$-displacement interpolation at $t = 0.5$ is degenerate.
This showcases that the $\AW_{2}$-displacement interpolation can degenerate even with non-degenerate marginals (cf. Example~6.2 in \cite{gunasingam2024adapted}). However, the $\W_{2}$, $\W_{2,\lambda}$, and $\AW_{2,\lambda}$-displacement interpolations are always non-degenerate Gaussian when $\lambda > 0$. Therefore, we can see $\pi^*
_{\AW_{2,\lambda}}$ as the non-degenerate approximation of $\pi^*
_{\AW_{2}}$ when $\pi^*
_{\AW_{2}}$ is unique. 
\end{example}
\begin{example}[Multi-dimension]
\label{ex:3}
Let $d=2$, $T=2$, \( \mu = \mathcal{N}(a, A) \) and \( \nu = \mathcal{N}(b, B) \), with $a=b=0\in \R^4$,
\[
A = \begin{bmatrix}
I  & L_{2,1}^\top \\
L_{2,1} & A_{2,2} 
\end{bmatrix},\quad
B = \begin{bmatrix}
I  & I \\
I & 2I 
\end{bmatrix},\quad
L = \begin{bmatrix}
I  & 0 \\
L_{2,1} & I 
\end{bmatrix}, \quad 
M = \begin{bmatrix}
I & 0 \\
I & I
\end{bmatrix},\quad
L_{2,1} = \frac{\sqrt{2}}{10}\begin{bmatrix}
7 & -1 \\
1 & 7
\end{bmatrix} - I,
\]     
and $A_{2,2} = L_{2,1}L_{2,1}^\top + I = \begin{bmatrix}
3 - \frac{7\sqrt{2}}{5} & 0 \\
0 & 3 - \frac{7\sqrt{2}}{5}
\end{bmatrix}$. Then $(M^\top L)_{2,2} = I = U_2 S_2 V_2^\top$ and $(M^\top L)_{1,1} = I + M_{2,1}^\top L_{2,1} =  \frac{\sqrt{2}}{10}\begin{bmatrix}
7 & -1 \\
1 & 7
\end{bmatrix} = U_1 S_1 V_1^\top,
$ where $
U_1 = \frac{\sqrt{2}}{2}
\begin{bmatrix}
1 & 1 \\
-1 & 1 
\end{bmatrix}$, $V_1 = \frac{1}{5}
\begin{bmatrix}
3 & 4 \\
-4 & 3 
\end{bmatrix}$, $S_1 = U_2 = V_2 = S_2 = I$. Therefore $S = I$, $D_\lambda = \diag([f_{\lambda}(1)]_{t=1}^{4})$ and
\begin{equation}
    \begin{split}
        \AW_{2,\lambda}^2(\mu,\nu)
        &= |a-b|^2 + \tr(A + B) - 2\tr(D_{\lambda} S) - \frac{\lambda}{2}\log\det(I - D_{\lambda}^2)\\
        &= 0 + 8-\frac{14\sqrt{2}}{5} + 6 - 8f_\lambda(1) - 2\lambda\log(1-f_{\lambda}(1)^2).
    \end{split}
\end{equation}
In particular, when $\lambda = 0$, $\AW_2^2(\mu,\nu) = 6$. Since $S$ is invertible, $\pi^* = \pi_P$ defined by \eqref{eq:thm.AW.reg.optimizer} is the unique optimizer of $\AW_{2}(\mu,\nu)$, with $P = \begin{bmatrix}
    P_1 & 0 \\
    0 & I
\end{bmatrix}$, $ P_1 = V_1^\top U_1 =
\frac{\sqrt{2}}{10}\begin{bmatrix}
7 & -1 \\
1 & 7
\end{bmatrix}$. This shows that, when $d > 1$, $P$ is in general block-diagonal but not diagonal.
\end{example}

\printbibliography
\end{document}